\newtheorem{theorem}{Theorem}[section]
\newtheorem{lemma}[theorem]{Lemma}
\newtheorem{proposition}[theorem]{Proposition}
\theoremstyle{definition}
\newtheorem{assum}{Assumption}[section]
\newtheorem{remark}[theorem]{Remark}
\numberwithin{equation}{section}
\author{Mingyu Mo\thanks{School of Mathematical Sciences, South China Normal University, Guangzhou, Guangdong, China
  (\email{mmymaths@qq.com}).}
\and Yimin Wei\thanks{School of Mathematical Sciences, Fudan University, Shanghai, China
  (\email{ymwei@fudan.edu.cn}).}
\and Qi Ye\thanks{School of Mathematical Sciences, South China Normal University, Guangzhou, Guangdong, China (\email{yeqi@m.scnu.edu.cn}). Pazhou Lab, Guangzhou, Guangdong, China.}
}
\begin{document}
\makeatletter

\begin{center}
\large{\bf Splitting Method for Support Vector Machine with Lower Semi-continuous Loss}
\end{center}\vspace{5mm}
\begin{center}

\textsc{Mingyu Mo, Qi $\text{\normalfont{Ye}}^{*}$}\end{center}

\vspace{2mm}

\footnotesize{
\noindent\begin{minipage}{14cm}
{\bf Abstract:}
   In this paper, we study the splitting method based on alternating direction method of multipliers for support vector machine in reproducing kernel Hilbert space with lower semi-continuous loss function. If the loss function is lower semi-continuous and subanalytic, we use the Kurdyka-Lojasiewicz inequality to show that the iterative sequence induced by the splitting method globally converges to a stationary point. The numerical experiments also demonstrate the effectiveness of the splitting method.
\end{minipage}
 \\[5mm]

\noindent{\bf Keywords:} {support vector machine, lower semi-continuous loss function, reproducing kernel Hilbert space, splitting method, Kurdyka-Lojasiewicz inequality.}\\

\noindent{\bf Mathematics Subject Classification:} {68Q32, 49J52.}

\hbox to14cm{\hrulefill}\par


\section{Introduction}
\label{sec:1}
    Support vector machine (SVM) is a well-known model for binary classification in machine learning. The basic idea of SVM is to find a function in a kernel-based function space to achieve the smallest regularized empirical risk such that the classification rule is constructed by the decision function and the sign function. The SVM is already achieved in reproducing kernel Hilbert spaces (RKHS) and convex loss functions (See \cite{MR2450103}). After the success of the SVM in the RKHS with the convex loss functions, people continue to investigate whether the SVM in the RKHS can be feasible for nonconvex loss functions. Recently, different nonconvex loss functions are proposed and used for traditional SVM (see \cite{Brooks2011, Feng2016, Liu2016, Park2011, Perez-Cruz2003, Shen2017, Yang2018}). But the algorithms of the SVM in the RKHS with the general nonconvex loss function are still lack of study. If the loss function is convex, then it is lower semi-continuous (L.S.C.) while the lower semi-continuity can not imply the convexity. Moreover, the SVM in the RKHS with the L.S.C. and nonconvex loss functions can perform better than the convex loss functions in manny cases (see Section 5).  Currently, people are most interested in the infinite-dimensional spaces for applications of machine learning such that the learning algorithm can be chosen from the enough large amounts of suitable solutions. In this paper, we consider the SVM in the infinite-dimensional RKHS with the L.S.C. loss function (See Section \ref{sec:2}).

    To construct the SVM in the RKHS, we will find a minimizer from Optimization (\ref{2.1}). In this paper, we show that Optimization (\ref{2.1}) has a minimizer in a finite-dimensional space spanned by the reproducing kernel basis related to the training data points. Thus Optimization (\ref{2.1}) can be equivalently transferred to a finite-dimensional Optimization. Based the equivalent Optimization, we discuss the splitting method for Optimization (\ref{2.1}) based on alternating direction method of multipliers (ADMM). By the splitting method, we obtain two subproblems which are computable easily. Moreover, the convergence of ADMM is already guaranteed well for the convex Optimizations (see \cite{Boyd2011Distributed}) and some special nonconvex Optimizations by the Kurdyka-Lojasiewicz (KL) property (see \cite{Guo2016Convergence,Li2015Global}). To complete the proof, we reexchange the convergence theorems in \cite{Guo2016Convergence,Li2015Global} and verify the convergence of the splitting method for Optimization (\ref{2.1}) if the loss function is L.S.C. and subanalytic for the global convergence to a stationary point and the error bound.


   This paper is organized as follows. We introduce the notations and preliminary materials of the SVM in Section \ref{sec:2}.  Next, we study the representer theorem for the L.S.C. loss functions and discuss how to solve Optimization (\ref{2.1}) by the splitting method in Section \ref{sec:3}. Moreover, we discuss the global convergence and convergent rates of splitting method for L.S.C. and subanalytic loss function in Section \ref{sec:4}. Finally, we look at the numerical examples of the different loss functions and RKHSs for the synthetic data and the real data in Section \ref{sec:5}.

    \section{Notations and Preliminaries}
   \label{sec:2}
    In this section, we review some notations and preliminaries of SVM. We denote the set of positive integers as $\mathbb{N}$ and the $d$-dimensional Euclidean space as $\mathbb{R}^{d}$ respectively. For the sample space $X\subseteq \mathbb{R}^{d}$ and the label space $Y=\{+1,-1\}$, the training data $D:=\{(\boldsymbol{x}_{i},y_{i}):i=1,2,...,N\}$ is composed of input data $\boldsymbol{x}_{1},\boldsymbol{x}_{2},...,\boldsymbol{x}_{N}\in X$ and output data $y_{1},y_{2},...,y_{N}\in Y$. We will find a mapping $\mathcal{R}:X\to Y$ related to $D$ such that $\mathcal{R}(\boldsymbol{x})$ is a good approximation of the response $y$ to an arbitrary $\boldsymbol{x}$. For the rest of this paper, the symbol $\boldsymbol{x}$ always refers to column vector, so  $\boldsymbol{x}^{T}$ denotes row vector.

    The SVM is an important class of mappings $\mathcal{R}:X\to Y$. The basic idea of traditional SVM is to find a hyperplane in $X$ that classifies all the training data in $D$ correctly and creates the biggest margin. Thus we construct $\mathcal{R}:X\to Y$ by the hyperplane and the sign function. However, the hyperplane to separate $D$ may not exist and we can only accept the hyperplane that misclassifies some training data. To help us define what we mean by "good", we introduce the loss function of the SVM to find the hyperplane and its equivalent optimization is a optimization with an offset term in a RKHS consisting all linear functions on $X$ with the linear reproducing kernel (See \cite[1.3 History of SVMs and Geometrical Interpretation]{MR2450103}). Obviously, the RKHS is a finite-dimensional kernel-based function space which is isometrically isomorphic to $\mathbb{R}^{d}$. Generally speaking, the SVM is to find a function on $X$ that achieves the smallest regularized empirical risk in a given RKHS such that the classification rule $\mathcal{R}:X\to Y$ is constructed by the decision function and the sign function.

    Since the RKHS is only a finite-dimensional space, in the rest of this paper, we shall discuss the SVM in some special infinite-dimensional spaces with different kinds of kernels. Generally speaking, a Hilbert space $\mathcal{H}$ of functions $f:X\rightarrow \mathbb{R}$ equipped with the complete inner product $\langle \cdot,\cdot \rangle_{\mathcal{H}}$ and the reproducing kernel $K:X\times X\rightarrow \mathbb{R}$ is called an RKHS if it satisfies the following two conditions

    (i) $K(\boldsymbol{x},\cdot)\in \mathcal{H}$ for all $\boldsymbol{x}\in X$,

    (ii) $f(\boldsymbol{x})=\langle f, K(\boldsymbol{x},\cdot)\rangle_{\mathcal{H}}$ for all $f\in \mathcal{H}$ and all $\boldsymbol{x}\in X$.

    In particular, for any $f,g\in \mathcal{H}$, the corresponding norm $\|f\|_{\mathcal{H}}=\sqrt{\langle f,f \rangle_{\mathcal{H}}}$ and the corresponding metric $\|f-g\|_{\mathcal{H}}$ are complete. On the other hand, the reproducing kernel $K$ of an RKHS $\mathcal{H}$ is uniquely determined. By \cite[Theorem 10.3 and 10.4]{Wendland2005}, $K$ is symmetric and positive definite (see \cite[Definition 4.15]{MR2450103}). In some cases, $K$ can be symmetric and strictly positive definite, for instance, Gaussian kernels, Mat\'ern kernels and so on. In particular, the value $K(\boldsymbol{x},\boldsymbol{x}')$ can often be interpreted as a measure of dissimilarity between the input values $\boldsymbol{x}$ and $\boldsymbol{x}'$.

   For more flexible kernels such as those of Gaussian kernels, which belong to the most important kernels in practice, the offset term has neither a known theoretical nor an empirical advantage. In addition, the theoretical analysis is often substantially complicated by offset term. Thus, we decide to exclusively consider the SVM in the RKHS without an offset term. Let us fix such an RKHS $\mathcal{H}$ and a real number $\lambda>0$, the SVM in the RKHS finds a minimizer of
    \begin{equation}\label{2.1}
       \inf_{f\in \mathcal{H}}\ \frac{1}{N} \sum_{i=1}^{N}L(\boldsymbol{x}_{i},y_{i},f(\boldsymbol{x}_{i}))+\lambda \|f\|_{\mathcal{H}}^{2}.
    \end{equation}
where $L:X\times Y\times \mathbb{R}\rightarrow [0,\infty)$ is a loss function and $\lambda \|f\|_{\mathcal{H}}^{2}$ is the regularization term used to penalize $f$ with the large RKHS norm. In the following, we will interpret $L(\boldsymbol{x},y,f(\boldsymbol{x}))$ as the loss of predicting $y$ by $f(\boldsymbol{x})$ if $\boldsymbol{x}$ is observed, that is, the smaller the value $L(\boldsymbol{x}, y,f(\boldsymbol{x}))$ is, the better $f(\boldsymbol{x})$ predicts $y$ in the sense of $L$. We denote the minimizer of Optimization (\ref{2.1}) as $f_{D}$. Next we use $f_{D}$ to construct the SVM in the RKHS as follows.
    \begin{equation}\label{2.2}
    \mathcal{R}(\boldsymbol{x})=\begin{cases} +1, & f_{D}(\boldsymbol{x}) \geq0,\\
                                 -1, & f_{D}(\boldsymbol{x})<0.
                   \end{cases}
    \end{equation}
Moreover, the classification rules constructed by different minimizers of Optimization (\ref{2.1}) have no difference in performance. In conclusion, we just need to find a minimizer of Optimization (\ref{2.1}). Next we discuss the solution of Optimization (\ref{2.1}) and splitting method for Optimization (\ref{2.1}) in Section \ref{sec:3}.

    \section{Representer Theorem and Splitting Method}
    \label{sec:3}
    In this section, we discuss how to solve Optimization (\ref{2.1}) by splitting method. First we discuss the solution of Optimization (\ref{2.1}). The loss function is the key factor of the SVM in the RKHS, because it not only determines the sensitivity to the noise of training data but also affects the sparsity of the SVM in the RKHS. For binary classification, the most straightforward loss function is the 0-1 loss function, which is an "ideal" loss function (see \cite{Cortes1995}). However, 0-1 loss function is bounded, nonconvex and L.S.C. but discontinuous. Trying to optimize 0-1 loss function directly leads to a L.S.C. and nonconvex optimization problem which is unable to deal with by traditional optimization theory and algorithm. Therefore, a number of surrogate loss functions are proposed in the literature, such as convex loss function, that is, $L(\boldsymbol{x}, y,\cdot)$ is a convex function for all $\boldsymbol{x}\in X$ and $y\in Y$. Besides convexity, we can define other loss functions in a similar way, such as continuity, smoothness, lower semi-continuity, etc. Specially, if $L$ is convex, then $L$ is L.S.C. But the lower semi-continuity can not imply the convexity.

    Generally speaking, loss functions can be divided into two categories: convex loss functions and nonconvex loss functions. Convex loss functions including Hinge loss, square loss are the most commonly used. If $L$ is a convex loss function, then the classical representer theorem \cite[Theorem 5.5]{MR2450103} assures that Optimization (\ref{2.1}) has a unique minimizer $f_{D}$ such that
    $$
        f_{D}\in \mathrm{span}\{K(\boldsymbol{x}_{1},\cdot),...,K(\boldsymbol{x}_{N},\cdot)\},
    $$
where $\text{\normalfont{span}}\{K(\boldsymbol{x}_{1},\cdot),...,K(\boldsymbol{x}_{N},\cdot)\}$ denotes the set of all finite linear combinations of $\{K(\boldsymbol{x}_{1},\cdot),...,K(\boldsymbol{x}_{N},\cdot)\}$.
A remarkable consequence of the representation above is the fact that $f_{D}$ is contained in a known finite- dimensional space, even if the space $\mathcal{H}$ itself is substantially larger. Thus, the convex loss functions are viewed as highly preferable in many publications because of their computational advantages (unique minimizer, ease-of-use, ability to be efficiently optimized by convex optimization tools, etc.).

    However, the convexity also offer poor approximations to 0-1 loss function. Therefore, different nonconvex loss functions, such as ramp loss, truncated logistic loss, truncated least square loss, truncated least square loss, bi-truncated loss, generalized exponential loss, generalized logistic loss and Sigmoid loss are proposed and used in SVM (see \cite{Brooks2011, Feng2016, Liu2016, Park2011, Perez-Cruz2003, Shen2017, Yang2018}). These loss functions mentioned above and 0-1 loss functions are L.S.C. and nonconvex. Recently, \cite[Corollary 4.1]{Huang2019} generalizes the classical representer theorem to the L.S.C. loss function. Moreover, by the preliminary numerical experiments, we find that the SVM in the RKHS with the L.S.C. and nonconvex loss functions may be better than the convex loss functions (see Section 5). Thus, we will discuss the SVM in the RKHS with the L.S.C. loss functions. Before we show our main result, we need some concepts and properties of $\mathcal{H}$.

    From \cite[Definition 2.1]{Xu2019}, an RKHS $\mathcal{H}$ can be seen as the two-sided reproducing kernel Banach space. On the other hand, the Riesz Representation Theorem assures that the dual space of $\mathcal{H}^{'}$ is isometrically isomorphic to $\mathcal{H}$, that is, $\mathcal{H}^{'}\cong \mathcal{H}$, which ensures that $\mathcal{H}$ is reflexive. Hence, by \cite[Definition 2.2.27]{Dales} and reflexity of $\mathcal{H}$, it shows that the predual space of $\mathcal{H}$ is $\mathcal{H}^{'}$. Moreover, since $\mathcal{H}$ is also strictly convex and smooth, we obtain the following formula of Fr\'echet derivative (see \cite[Remark 2.24]{Xu2019})
    \begin{equation}\label{3.1}
       \nabla(\|\cdot\|_{\mathcal{H}})(f)=\frac{f}{\|f\|_{\mathcal{H}}}\in \mathcal{H},\ \ \forall f\neq 0,
    \end{equation}
where $\nabla$ denotes the Fr\'echet derivative. In particular, for a differentiable real function, $\nabla$ represents the gradient.

    \begin{proposition}\label{theorem:3.1}
       If $L$ is lower semi-continuous, then Optimization \text{\normalfont{(\ref{2.1})}} has a minimizer $f_{D}$ such that
    $$
      f_{D}\in \mathrm{span}\{K(\boldsymbol{x}_{1},\cdot),...,K(\boldsymbol{x}_{N},\cdot)\}.
    $$
    \end{proposition}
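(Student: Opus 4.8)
The plan is to reduce the infinite-dimensional Optimization (\ref{2.1}) to a minimization over the finite-dimensional subspace
$$
V := \mathrm{span}\{K(\boldsymbol{x}_{1},\cdot),\dots,K(\boldsymbol{x}_{N},\cdot)\},
$$
and then to prove existence of a minimizer on $V$ by a Weierstrass-type argument. First I would observe that $V$ is finite-dimensional, hence a closed subspace of the Hilbert space $\mathcal{H}$, so every $f\in\mathcal{H}$ admits the orthogonal decomposition $f=f_{V}+f_{\perp}$ with $f_{V}\in V$ and $f_{\perp}\in V^{\perp}$. The crucial point is that the reproducing property (ii) forces the fitting term to depend only on $f_{V}$: since $K(\boldsymbol{x}_{i},\cdot)\in V$, we have $\langle f_{\perp},K(\boldsymbol{x}_{i},\cdot)\rangle_{\mathcal{H}}=0$, and therefore
$$
f(\boldsymbol{x}_{i})=\langle f,K(\boldsymbol{x}_{i},\cdot)\rangle_{\mathcal{H}}=\langle f_{V},K(\boldsymbol{x}_{i},\cdot)\rangle_{\mathcal{H}}=f_{V}(\boldsymbol{x}_{i}),\qquad i=1,\dots,N.
$$

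Next I would compare the objective values at $f$ and at its projection $f_{V}$. By the identity above, the loss sum $\frac{1}{N}\sum_{i=1}^{N}L(\boldsymbol{x}_{i},y_{i},f(\boldsymbol{x}_{i}))$ is unchanged when $f$ is replaced by $f_{V}$, while the Pythagorean identity gives $\|f\|_{\mathcal{H}}^{2}=\|f_{V}\|_{\mathcal{H}}^{2}+\|f_{\perp}\|_{\mathcal{H}}^{2}\geq\|f_{V}\|_{\mathcal{H}}^{2}$. Writing $J$ for the objective of (\ref{2.1}), these two facts yield $J(f_{V})\leq J(f)$ for every $f\in\mathcal{H}$, with equality forcing $f_{\perp}=0$ via (\ref{3.1}). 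Consequently $\inf_{f\in\mathcal{H}}J(f)=\inf_{f\in V}J(f)$, so it suffices to produce a minimizer of $J$ restricted to $V$.

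Finally I would establish existence on the finite-dimensional space $V$. Because $L$ is lower semi-continuous and the map $f\mapsto(f(\boldsymbol{x}_{1}),\dots,f(\boldsymbol{x}_{N}))$ is continuous on $V$ (again by the reproducing property), the loss sum is lower semi-continuous on $V$; adding the continuous regularizer $\lambda\|\cdot\|_{\mathcal{H}}^{2}$ preserves lower semi-continuity. Since $L\geq 0$, we have $J(f)\geq\lambda\|f\|_{\mathcal{H}}^{2}$, so $J$ is coercive and its sublevel sets are bounded; being lower semi-continuous they are also closed, hence compact in the finite-dimensional $V$. As $J$ is finite at $f=0$, a nonempty compact sublevel set exists, and a lower semi-continuous function attains its infimum there, giving a minimizer $f_{D}\in V$. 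Combined with the reduction of the previous paragraph, $f_{D}$ minimizes $J$ over all of $\mathcal{H}$ and lies in $V$, which is the claim. The main obstacle, relative to the classical convex representer theorem, is precisely this existence step: without convexity one cannot invoke uniqueness or the usual variational characterization, and one must instead lean on coercivity together with lower semi-continuity and finite dimensionality to recover a Weierstrass-type existence guarantee.
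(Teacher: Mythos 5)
Your proof is correct, but it takes a genuinely different route from the paper. The paper's proof is essentially a citation: it invokes the generalized representer theorem of Huang--Liu--Tan--Ye \cite[Corollary 4.1]{Huang2019} for reproducing kernel Banach spaces (after noting that $\mathcal{H}$, being a reflexive Hilbert space, fits that framework), which yields a minimizer $f_{D}$ with $\nabla(\|\cdot\|_{\mathcal{H}})(f_{D})\in\mathrm{span}\{K(\boldsymbol{x}_{1},\cdot),\dots,K(\boldsymbol{x}_{N},\cdot)\}$, and then uses the derivative formula (\ref{3.1}), $\nabla(\|\cdot\|_{\mathcal{H}})(f_{D})=f_{D}/\|f_{D}\|_{\mathcal{H}}$, to convert that statement into $f_{D}\in\mathrm{span}\{K(\boldsymbol{x}_{i},\cdot)\}$. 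You instead give a self-contained argument: the classical orthogonal-projection trick (the loss term sees only the projection $f_{V}$, and the Pythagorean identity makes the regularizer favor $f_{V}$), which reduces the problem to the finite-dimensional subspace $V$, followed by a Weierstrass-type existence argument on $V$ using coercivity (from $L\geq 0$ and $\lambda>0$) together with lower semi-continuity and compactness of sublevel sets in finite dimensions. Your route buys transparency and independence from the Banach-space machinery --- it makes clear that existence, the genuinely nonconvex difficulty here, follows from nothing more than lower semi-continuity, nonnegativity, and coercivity --- whereas the paper's route buys brevity and sits inside a more general framework (reproducing kernel Banach spaces) that extends beyond the Hilbert setting. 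One small blemish: your parenthetical claim that equality $J(f_{V})=J(f)$ forces $f_{\perp}=0$ ``via (\ref{3.1})'' is a misattribution --- it follows directly from $\lambda>0$ and the Pythagorean identity --- but that remark is not needed anywhere in your argument.
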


    \begin{proof}
      Since $\mathcal{H}$ is an reproducing kernel Banach space which has a predual space $\mathcal{H}^{'}$, from the condition (i) in the definition of the RKHS, we have that
    \begin{equation*}
      \{K(\boldsymbol{x},\cdot):\boldsymbol{x}\in X\}\subseteq \mathcal{H}\cong \mathcal{H}^{'}.
    \end{equation*}
Therefore, \cite[Corollary 4.1]{Huang2019} ensures that Optimization (\ref{2.1}) has a minimizer $f_{D}$ such that
     \begin{equation*}
       \nabla(\|\cdot\|_{\mathcal{H}})(f_{D}) \in \text{\normalfont{span}}\{K(\boldsymbol{x}_{1},\cdot),...,K(\boldsymbol{x}_{N},\cdot)\}.
     \end{equation*}
If $f_{D}\neq 0$, then (\ref{3.1}) shows that
     \begin{equation*}
        \frac{f_{D}}{\|f_{D}\|_{\mathcal{H}}}\in \text{\normalfont{span}}\{K(\boldsymbol{x}_{1},\cdot),...,K(\boldsymbol{x}_{N},\cdot)\}.
     \end{equation*}
Moreover, by the definition of $\text{\normalfont{span}}\{K(\boldsymbol{x}_{1},\cdot),...,K(\boldsymbol{x}_{N},\cdot)\}$, we have that
     \begin{equation*}
       f_{D}\in \text{\normalfont{span}}\{K(\boldsymbol{x}_{1},\cdot),...,K(\boldsymbol{x}_{N},\cdot)\}.
     \end{equation*}
If $f_{D}=0$, then $0 \in \text{\normalfont{span}}\{K(\boldsymbol{x}_{1},\cdot),...,K(\boldsymbol{x}_{N},\cdot)\}$. In conclusion,
     \begin{equation*}
       f_{D}\in \text{\normalfont{span}}\{K(\boldsymbol{x}_{1},\cdot),...,K(\boldsymbol{x}_{N},\cdot)\}.
     \end{equation*}
The proof is complete.
   \end{proof}

   \begin{remark}
      If $L$ is L.S.C. and nonconvex, then Optimization \text{\normalfont{(\ref{2.1})}} may have more than one minimizers. Moreover, Proposition \text{\normalfont{3.1}} guarantees that at least one of minimizers is contained in $\text{\normalfont{span}}\{K(\boldsymbol{x}_{1},\cdot),...,K(\boldsymbol{x}_{N},\cdot)\}$. Thus, we focus on find the minimizer in $\text{\normalfont{span}}\{K(\boldsymbol{x}_{1},\cdot),...,K(\boldsymbol{x}_{N},\cdot)\}$.
   \end{remark}

    By Proposition \ref{theorem:3.1}, Optimization (\ref{2.1}) in the RKHS $\mathcal{H}$ can be equivalently transferred to Optimization (\ref{2.1}) in $\mathrm{span}\{K(\boldsymbol{x}_{1},\cdot),...,K(\boldsymbol{x}_{N},\cdot)\}$. Next we show that Optimization (\ref{2.1}) in $\mathrm{span}\{K(\boldsymbol{x}_{1},\cdot),...,K(\boldsymbol{x}_{N},\cdot)\}$ also can be equivalently transferred to a finite-dimensional Optimization in $\mathbb{R}^{N}$. We denote the kernel matrix of $K$ as
    \begin{equation*}
       A:=\left(K(\boldsymbol{x}_{i},\boldsymbol{x}_{j})\right)_{i,j=1}^{N,N},
    \end{equation*}
and $A$ is symmetric and positive definite. For each $f \in \text{\normalfont{span}}\{K(\boldsymbol{x}_{1},\cdot),...,K(\boldsymbol{x}_{N},\cdot)\}$, there exists a unique vector $\boldsymbol{c}=(c_{1},...,c_{N})^{T}\in \mathbb{R}^{N}$ such that $f$ has the finite representation
     \begin{equation*}
        f=\sum_{j=1}^{N} c_{j}K(\boldsymbol{x}_{j},\cdot),
     \end{equation*}
which ensures that
     \begin{equation*}
        f(\boldsymbol{x}_{i})=\sum_{j=1}^{N} c_{j}K(\boldsymbol{x}_{j},\boldsymbol{x}_{i})=\sum_{j=1}^{N} K(\boldsymbol{x}_{i},\boldsymbol{x}_{j})c_{j}=(A\boldsymbol{c})_{i}.
     \end{equation*}
Therefore, it is easy to check that
     \begin{equation}\label{3.2}
        f\mapsto (f(\boldsymbol{x}_{1}),f(\boldsymbol{x}_{2}),...,f(\boldsymbol{x}_{N}))^{T}=A\boldsymbol{c}
     \end{equation}
is an isometric isomorphism from $\mathrm{span}\{K(\boldsymbol{x}_{1},\cdot),...,K(\boldsymbol{x}_{N},\cdot)\}$
onto $\mathbb{R}^{N}$. On the other hand, combining the property of inner product $\langle \cdot,\cdot \rangle_{\mathcal{H}}$ with the reproducing property in the RKHS, it holds that
     \begin{equation}\label{3.3}
       \|f\|_{\mathcal{H}}^{2}=\langle f,\sum_{i=1}^{N} c_{i}K(\boldsymbol{x}_{i},\cdot)\rangle_{\mathcal{H}}=\sum_{i=1}^{N} c_{i}\langle f,K(\boldsymbol{x}_{i},\cdot)\rangle_{\mathcal{H}}=\sum_{i=1}^{N} c_{i}f(\boldsymbol{x}_{i})=\boldsymbol{c}^{T}A\boldsymbol{c}.
     \end{equation}
So Optimization (\ref{2.1}) can be equivalently transferred to the following Optimization in $\mathbb{R}^{N}$
     \begin{equation}\label{3.4}
        \min_{\boldsymbol{c}\in \mathbb{R}^{N}}\enskip \frac{1}{N}\sum_{i=1}^N L(\boldsymbol{x}_{i},y_{i},(A\boldsymbol{c})_{i})+\lambda \boldsymbol{c}^{T}A\boldsymbol{c},
     \end{equation}
We denote the minimizer of Optimization (\ref{3.4}) as $\boldsymbol{c}_{D}=((c_{1})_{D},(c_{2})_{D},...,(c_{N})_{D})^{T}$, it follows that
     \begin{equation*}
       f_{D}=\sum_{i=1}^{N}(c_{i})_{D} K(\boldsymbol{x}_{i},\cdot).
     \end{equation*}
This ensures that we employ the finite suitable parameters to reconstruct the SVM in the RKHS.

    By this idea, we consider to find an algorithm based on Optimization (\ref{3.4}) to compute Optimization (\ref{2.1}) easily. At present, we mainly use the subdifferential, proximal operator and Fenchel conjugate of loss function to design algorithms for SVM, such as subgradient method, Lagrange multipliers method and sequential minimal optimization (SMO). These classical numerical algorithms are suitable for solving convex and smooth programs. Since $\boldsymbol{c}\mapsto (A\boldsymbol{c})_{i}$ is continuous, the lower semi-continuity of $L(\boldsymbol{x}_{i},y_{i},\cdot)$ guarantees the lower semi-continuity of $\boldsymbol{c}\mapsto L(\boldsymbol{x}_{i},y_{i},(A\boldsymbol{c})_{i})$ for all $i=1,2,...,N$ which ensures that $F$ is L.S.C.. On the other hand, it is clear that $G$ is continuous. In conclusion, Optimization (\ref{3.4}) is L.S.C. finite-dimensional Optimization which may be nonsmooth or nonconvex. Many classical numerical algorithms are not suitable for Optimization (\ref{3.4}) when $L$ is L.S.C.

    The ADMM algorithm, as one of splitting techniques, has been successfully exploited in a wide range of structured regularization optimization problems in machine learning. ADMM can even be used to minimize nonsmooth or nonconvex function, which solves optimization problem by breaking them into smaller pieces. Moreover, Paper \cite{Wang2021} discusses how to use ADMM for the traditional SVM with the 0-1 loss function. For the general L.S.C. functions and kernels, we observe that the subproblems in ADMM for Optimization (\ref{3.4}) can be transferred into some Optimizations in $\mathbb{R}$ and a well-posed linear system, each of which are thus easier to handle. In summary, we discuss the splitting method based on ADMM for Optimization (\ref{2.1}). Moreover, if the sample size $N$ is small to moderate, then the preliminary numerical experiments show that the splitting method is a fast algorithm for Optimization (\ref{2.1}) (see Section \ref{sec:5}). Hence, we will study how to solve Optimization (\ref{2.1}) by the splitting method. For notational convenience, let
    $$
       F(A\boldsymbol{c})=\frac{1}{N}\sum\limits_{i=1}^N L(\boldsymbol{x}_{i},y_{i},(A\boldsymbol{c})_{i}), \ \  G(\boldsymbol{c})=\lambda \boldsymbol{c}^{T}A\boldsymbol{c}.
    $$
To describe the algorithm, we first reformulate Optimization (\ref{3.4}) as
    \begin{equation}\label{3.5}
    \begin{array}{cl}
    \displaystyle\min_{\boldsymbol a, \boldsymbol c}\enskip & \displaystyle  F(\boldsymbol{\alpha})+G(\boldsymbol{c}),  \\
    \text{s.t.}\enskip & \boldsymbol{\alpha}=A\boldsymbol{c}.
    \end{array}
    \end{equation}
Recall that the augmented Lagrangian function is defined as:
    \begin{equation*}
    \mathcal{L}_{\rho}(\boldsymbol{\alpha},\boldsymbol{c},\boldsymbol{\gamma})=F(\boldsymbol{\alpha})+G(\boldsymbol{c})+\boldsymbol{\gamma}^{T}(\boldsymbol{\alpha}-A\boldsymbol{c})+\frac{\rho}{2}\|\boldsymbol{\alpha}-A\boldsymbol{c}\|^{2},
    \end{equation*}
where the Lagrangian multiplier $\rho>0$ and $\|\cdot\|$ denotes $2$-norm in Euclidean space. The splitting method  is presented as follows. Suppose that the algorithm is initialized at $(\boldsymbol{\alpha}^{0},\boldsymbol{c}^{0}, \boldsymbol{\gamma}^{0})$, its iterative scheme is
      \begin{align}
         \boldsymbol{\alpha}^{k+1}&\in \mathop{\mathrm{argmin}}_{\boldsymbol{\alpha}\in \mathbb{R}^{N}}\ \mathcal{L}_{\rho}(\boldsymbol{\alpha},\boldsymbol{c}^{k},\boldsymbol{\gamma}^{k}),\tag{S-1}  \label{S-1} \\
         \boldsymbol{c}^{k+1}&\in \mathop{\mathrm{argmin}}_{\boldsymbol{c}\in \mathbb{R}^{N}}\ \mathcal{L}_{\rho}(\boldsymbol{\alpha}^{k+1},\boldsymbol{c},\boldsymbol{\gamma}^{k}), \tag{S-2} \label{S-2} \\
         \boldsymbol{\gamma}^{k+1}&:=\boldsymbol{\gamma}^{k}+\rho (\boldsymbol{\alpha}^{k+1}-A\boldsymbol{c}^{k+1}), \label{S-3} \tag{S-3} \\
         s^{k+1}&:=\sum_{i=1}^{N}(\boldsymbol{c}^{k+1})_{i}K(\boldsymbol{x}_{i},\cdot), \label{S-4} \tag{S-4}
       \end{align}
where $k$ is an iteration counter. Since (\ref{S-1}) only depends on $\boldsymbol{\alpha}$ and (\ref{S-2}) only depends on $\boldsymbol{c}$, by combining the linear and quadratic terms in $\mathcal{L}_{\rho}$, we equivalently transfer (\ref{S-1}) and (\ref{S-2}) to
      \begin{align}
         \boldsymbol{\alpha}^{k+1}&\in \mathop{\mathrm{argmin}}_{\boldsymbol{\alpha}\in \mathbb{R}^{N}}\ F(\boldsymbol{\alpha})+\frac{\rho}{2}\|\boldsymbol{\alpha}-A\boldsymbol{c}^{k}+\frac{1}{\rho}\boldsymbol{\gamma}^{k}\|^{2},\tag{S-1'}  \label{S-1'} \\
         \boldsymbol{c}^{k+1}&\in \mathop{\mathrm{argmin}}_{\boldsymbol{c}\in \mathbb{R}^{N}}\ G(\boldsymbol{c})+\frac{\rho}{2}\| \boldsymbol{\alpha}^{k+1}-A\boldsymbol{c}+\frac{1}{\rho}\boldsymbol{\gamma}^{k}\|^{2} \tag{S-2'} \label{S-2'}.
      \end{align}
By definition, it is easy to check that (\ref{S-1'}) is L.S.C. and (\ref{S-2'}) is continuous. Moreover, (\ref{S-1'}) and (\ref{S-2'}) is coercive (see \cite[Definition 2.13]{Beck2017}), that is,
   \begin{align*}
    &\lim_{\|\boldsymbol{\alpha}\|\rightarrow \infty} F(\boldsymbol{\alpha})+\frac{\rho}{2}\|\boldsymbol{\alpha}-A\boldsymbol{c}^{k}+\frac{1}{\rho}\boldsymbol{\gamma}^{k}\|^{2} \to \infty,\\
    &\lim_{\|\boldsymbol{c}\|\rightarrow \infty} G(\boldsymbol{c})+\frac{\rho}{2}\| \boldsymbol{\alpha}^{k+1}-A\boldsymbol{c}+\frac{1}{\rho}\boldsymbol{\gamma}^{k}\|^{2}\to \infty.
   \end{align*}
Thus, Weierstrass Theorem \cite[Theorem 2.14]{Beck2017} assures that (\ref{S-1'}) and (\ref{S-2'}) both have a solution. As a consequence, the splitting method above is well-defined and an infinite iterative sequence $\{(\boldsymbol{\alpha}^{k},\boldsymbol{c}^{k},\boldsymbol{\gamma}^{k},s^{k})\}$ is generated. Moreover, $\{s^{k}\}$ can be seen as an infinite iterative sequence to approximate the minimizer of Optimization (\ref{2.1}).

     Next, we discuss how to solve subproblems (\ref{S-1'}) and (\ref{S-2'}). As for (\ref{S-1'}), since $F$ and $\|\cdot\|^{2}$ can be split of the variable into subvectors, that is,
   \begin{equation*}
      \frac{1}{N}\sum\limits_{i=1}^N L(\boldsymbol{x}_{i},y_{i},\alpha_{i})+\frac{\rho}{2}\|\boldsymbol{\alpha}-A\boldsymbol{c}^{k}+\frac{1}{\rho}\boldsymbol{\gamma}^{k}\|^{2}=\sum_{i=1}^{N} \frac{L(\boldsymbol{x}_{i},y_{i},\alpha_{i})}{N}+\frac{\rho}{2}\left(\alpha_{i}-(A\boldsymbol{c}^{k})_{i}+\frac{1}{\rho}(\boldsymbol{\gamma}^{k})_{i}\right)^{2},
   \end{equation*}
we equivalently transfer an Optimization in $\mathbb{R}^{N}$ to some Optimizations in $\mathbb{R}$, that is,
    \begin{equation}\tag{S-1''}\label{S-1''}
       (\boldsymbol{\alpha}^{k+1})_{i}\in \mathop{\mathrm{argmin}}_{\alpha_{i}}\ \frac{L(\boldsymbol{x}_{i},y_{i},\alpha_{i})}{N} +\frac{\rho}{2}\left(\alpha_{i}-(A\boldsymbol{c}^{k})_{i}+\frac{1}{\rho}(\boldsymbol{\gamma}^{k})_{i}\right)^{2},\ i=1,2,...,N.
    \end{equation}
In other words, we solve (\ref{S-1'}) in $N$-dimensional space by breaking it into $N$ Optimizations (\ref{S-1''}) in 1-dimensional space and each of them is easier to handle. To illustrate above, we give a simple example. Assume that $L$ is Hinge loss, that is,
    $$
        L_{\text{Hinge}}(\boldsymbol{x}_{i},y_{i},\alpha_{i})=\max\{0, 1-y_{i}\alpha_{i}\}.
    $$
By simple algebra, if $y_{i}=+1$, then
    $$
       (\boldsymbol{\alpha}^{k+1})_{i}=
       \begin{cases}
           u_{i}+\frac{1}{\rho N} & u_{i}<1-\frac{1}{\rho N}, \\
           1 & 1-\frac{1}{\rho N}\leq u_{i}<1, \\
           u_{i} & u_{i}\geq 1,
       \end{cases}
    $$
where $u_{i}=(A\boldsymbol{c}^{k})_{i}-\frac{1}{\rho}(\boldsymbol{\gamma}^{k})_{i}$. On the other hand, if $y_{i}=-1$, then
    $$
       (\boldsymbol{\alpha}^{k+1})_{i}=
       \begin{cases}
           u_{i} & u_{i}<-1, \\
           -1 & -1\leq u_{i}<-1+\frac{1}{\rho N}, \\
           u_{i}-\frac{1}{\rho N} & u_{i}\geq -1+\frac{1}{\rho N}.
       \end{cases}
    $$
For the general L.S.C. loss function $L$, the solution set of (\ref{S-1''}) may not be a singleton. For example, assume that $L$ is ramp loss, that is,
    $$
        L_{\text{Ramp}}(\boldsymbol{x}_{i},y_{i},\alpha_{i})=
        \begin{cases}
           1 & 1-y_{i}\alpha_{i}>1, \\
           1-y_{i}\alpha_{i} & 0< 1-y_{i}\alpha_{i}\leq 1, \\
           0 & 1-y_{i}\alpha_{i}<0.
       \end{cases}
    $$
Similarly, by simple algebra, if $0<\frac{1}{\rho N}<2$ and $y_{i}=+1$, then
    $$
       (\boldsymbol{\alpha}^{k+1})_{i}=
       \begin{cases}
           u_{i} & u_{i}\leq -\frac{1}{2\rho N}, \\
           u_{i}\ \text{or}\ u_{i}+\frac{1}{\rho N} & u_{i}=-\frac{1}{2\rho N}, \\
           u_{i}+\frac{1}{\rho N} & -\frac{1}{2\rho N}<u_{i}\leq 1-\frac{1}{pN}, \\
           1 & 1-\frac{1}{\rho N}<u_{i}<1, \\
           u_{i} & u_{i}\geq 1.
       \end{cases}
    $$
if $0<\frac{1}{\rho N}<2$ and $y_{i}=-1$, then
    $$
       (\boldsymbol{\alpha}^{k+1})_{i}=
       \begin{cases}
           u_{i} & u_{i}\leq -1, \\
           -1 & -1<u_{i}<-1+\frac{1}{\rho N}, \\
           u_{i}-\frac{1}{\rho N} & -1+\frac{1}{pN}\leq u_{i}<\frac{1}{2\rho N}, \\
           u_{i}\ \text{or}\ u_{i}-\frac{1}{\rho N} & u_{i}=\frac{1}{2\rho N}, \\
           u_{i} & u_{i}>\frac{1}{2\rho N}.
       \end{cases}
    $$
In this case, we choose one of the elements in the solution set as $(\boldsymbol{\alpha}^{k+1})_{i},\ i=1,...,N$.

    As for (\ref{S-2'}), since $A$ is symmetric and strictly positive definite and $\lambda,\rho>0$, it is clear that (\ref{S-2'}) is nonnegative, convex and continuously differentiable. Moreover, $\boldsymbol{c}^{k+1}$ is a minimizer of (\ref{S-2'}) and thus a stationary point, that is,
    \begin{equation*}
      2\lambda A \boldsymbol{c}^{k+1}-\rho A(\boldsymbol{\alpha}^{k+1}-A\boldsymbol{c}^{k+1}+\frac{1}{\rho}\boldsymbol{\gamma}^{k})=0.
    \end{equation*}
By rearranging terms, $\boldsymbol{c}^{k+1}$ is the solution of the following well-posed linear system
    \begin{equation}\label{S-2''} \tag{S-2''}
     (2\lambda I+\rho A)\boldsymbol{c}=\rho \boldsymbol{\alpha}^{k+1}+\boldsymbol{\gamma}^{k}.
    \end{equation}
where $I$ is the identity matrix with and order $N$. Since $2\lambda I+\rho A$ is symmetric and strictly positive definite, we can use conjugate gradient method to obtain $\boldsymbol{c}^{k+1}$ (See \cite[4.7.3 Conjugate Gradient Methods]{Kincaid2002}).

    When $\boldsymbol{\alpha}^{k+1}$ and $\boldsymbol{c}^{k+1}$ is acquired, we can obtain $\boldsymbol{\gamma}^{k+1}$ by (\ref{S-3}). However, we have a simpler one in mind that accomplishes the same goal. Substituting (\ref{S-3}) into (\ref{S-2''}) and rearranging terms, we have that
    \begin{equation}\label{S-3'} \tag{S-3'}
       \boldsymbol{\gamma}^{k+1}=2\lambda\boldsymbol{c}^{k+1}.
    \end{equation}

    In conclusion, the splitting method of Optimization (\ref{2.1}) can be represented as follows:

    \begin{algorithm}[H]
    \footnotesize
    \caption{Splitting Method for SVM in RKHS with L.S.C Loss Function}
    \begin{algorithmic}
      \STATE \textbf{Input:} initial value $(\boldsymbol{\alpha}^{0},\boldsymbol{c}^{0}, \boldsymbol{\gamma}^{0})$, the training data $D$, loss function $L$, kernel matrix $A$, regularization parameter $\lambda>0$, Lagrange multiplier $\rho>0$ and stopping threshold $\varepsilon_{0}>0$.

      \STATE \textbf{for} $k=0,1,2,...$ \textbf{do}

      \qquad (1) Choose $\boldsymbol{\alpha}^{k+1}$ in $\mathop{\mathrm{argmin}}\limits_{\alpha_{i}}\ \frac{1}{N} L(\boldsymbol{x}_{i},y_{i},\alpha_{i})+\frac{\rho}{2}\| \alpha_{i}-(A\boldsymbol{c}^{k})_{i}+\frac{1}{\rho}(\boldsymbol{\gamma}^{k})_{i}\|^{2},\ i=1,...,N$.

      \qquad (2) Let $\boldsymbol{c}^{k(0)}=\boldsymbol{c}^{k}$, $\boldsymbol{r}^{k(0)}=\rho \boldsymbol{\alpha}^{k+1}+\boldsymbol{\gamma}^{k}-A\boldsymbol{c}^{k}$, $\boldsymbol{d}^{k(0)}=\rho \boldsymbol{\alpha}^{k+1}+\boldsymbol{\gamma}^{k}-A\boldsymbol{c}^{k}$.

      \qquad  \textbf{for} $j=0,1,2,...$ \textbf{do}

      \qquad \qquad (2-1) Set $\boldsymbol{c}^{k(j+1)}\leftarrow \boldsymbol{c}^{k(j)}+\frac{\|\boldsymbol{r}^{k(j)}\|^{2}}{(\boldsymbol{d}^{k(j)})^{T}A\boldsymbol{d}^{k(j)}}\boldsymbol{d}^{k(j)}$.

      \qquad \qquad (2-2) Set $\boldsymbol{r}^{k(j+1)}\leftarrow \boldsymbol{r}^{k(j)}-\frac{\|\boldsymbol{r}^{k(j)}\|^{2}}{(\boldsymbol{d}^{k(j)})^{T}A\boldsymbol{d}^{k(j)}}A\boldsymbol{d}^{k(j)}$.

      \qquad \qquad (2-3) Set $\boldsymbol{d}^{k(j+1)}\leftarrow \boldsymbol{r}^{k(j+1)}+\frac{\|\boldsymbol{r}^{k(j+1)}\|^{2}}{\|\boldsymbol{r}^{k(j)}\|^{2}}\boldsymbol{d}^{k(j)}$.

      \qquad \qquad \textbf{if} $\boldsymbol{r}^{k(j+1)}=\boldsymbol{0}$ \textbf{then} stop.

      \qquad \textbf{end for}

      \qquad \textbf{Output:} The approximate solution $\boldsymbol{c}^{k(j+1)}$ as $\boldsymbol{c}^{k+1}$.

      \qquad (3) Set $\boldsymbol{\gamma}^{k+1}\leftarrow 2\lambda\boldsymbol{c}^{k+1}$.

      \qquad (4) Set $s^{k+1}\leftarrow \sum\limits_{i=1}^{N}(\boldsymbol{c}^{k+1})_{i}K(\boldsymbol{x}_{i},\cdot)$.

      \qquad \textbf{if} $\| \boldsymbol{\alpha}^{k+1}-A\boldsymbol{c}^{k+1}\|<\varepsilon_{0}$ \textbf{then} stop.

      \STATE \textbf{end for}

      \STATE \textbf{Output:} The approximate solution $s^{k+1}$.
    \end{algorithmic}
    \label{alg:ADMM}
    \end{algorithm}

    In Section \ref{sec:4}, we verify that if $L$ is L.S.C. and subanalytic and $\rho$ is sufficiently large, $\{s^{k}\}$ globally converges to a stationary point of Optimization (\ref{2.1}). In particular, if $L$ is convex, then $\{s^{k}\}$ is globally convergent to the minimizer $f_{D}$. If $L$ is L.S.C. and nonconvex, then $\{s^{k}\}$ may converge to a stationary point not a minimizer. Hence, it is better to solve Optimization (\ref{2.1}) repeatedly by selecting some initial values randomly and choosing the minimizer of these outputs as the approximate solution $s_{D}$ of Optimization (\ref{2.1}). Finally, we construct the classification rule by $s_{D}$, that is,
    $$
    \mathcal{R}(\boldsymbol{x})=\begin{cases} +1, & s_{D}(\boldsymbol{x}) \geq0,\\
                                 -1, & s_{D}(\boldsymbol{x})<0.
                   \end{cases}
    $$

    We will complete the convergence analysis of Algorithm \ref{alg:ADMM} in Section \ref{sec:4}.

    \section{Convergence Analysis}
    \label{sec:4}
    In this section, we discuss the convergence of $\{s^{k}\}$ inspired from the work \cite{Guo2016Convergence,Li2015Global} and use similar line of arguments therein. To ensure the convergence, we need the following assumption of loss function.
    \begin{assum}[Loss Function]
    \label{Assumption:4.1}
       For any $\boldsymbol{x}\in X$ and $y\in Y$,

       \text{\normalfont{(i)}} $L(\boldsymbol{x},y,\cdot)$ is lower semi-continuous and subanalytic on $\mathbb{R}$.

      \text{\normalfont{(ii)}} $0$ is not the stationary point of $L(\boldsymbol{x},y,\cdot)$  in the sense of limiting subdifferential, that is, $0\notin \partial L(\boldsymbol{x},y,0)$. \text{\normalfont{(see \cite[Definition 8.3]{Rockafellar1998})}}
    \end{assum}

    Subanalytic functions are quite wide, including semi-algebraic, analytic and semi-analytic functions (see \cite[6.6 Analytic Problems]{Finite}). More precisely, polynomial functions and piecewise polynomial functions are subanalytic functions. However, subanalyticity does not even imply continuity. Moreover, some margin-based loss functions (see \cite[2.3 Margin-Based Losses for Classification Problems]{MR2450103}) satisfy Assumption \ref{Assumption:4.1}, such as the least square loss, the Hinge loss, the truncated least squares loss, logistic loss and so on.

    \begin{theorem}\label{Theorem:4.1}
        Suppose that Assumption \text{\normalfont{\ref{Assumption:4.1}}} holds and Algorithm \text{\normalfont{\ref{alg:ADMM}}} is initialized at $(\boldsymbol{\alpha}^{0},\boldsymbol{c}^{0},\boldsymbol{\gamma}^{0})$. If $A$ is symmetric and strictly positive definite and $\rho>4\lambda\|A^{-1}\|$, then $\{s^{k}\}$ converges to a stationary point $s^{*}$ of Optimization \text{\normalfont{(\ref{2.1})}}.
    \end{theorem}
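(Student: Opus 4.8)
The plan is to verify that the scheme \eqref{S-1'}--\eqref{S-3'} fits the Kurdyka--{\L}ojasiewicz-based ADMM convergence framework of \cite{Guo2016Convergence,Li2015Global}, applied to the two-block splitting $\min F(\boldsymbol{\alpha})+G(\boldsymbol{c})$ subject to $\boldsymbol{\alpha}=A\boldsymbol{c}$, and then to transport the resulting convergence of $\{\boldsymbol{c}^{k}\}$ back to $\{s^{k}\}$ through the isometric isomorphism \eqref{3.2}. The natural potential is the augmented Lagrangian $\mathcal{L}_{\rho}$ itself. Two structural identities drive everything: the optimality of \eqref{S-2'} combined with \eqref{S-3} collapses to $\boldsymbol{\gamma}^{k+1}=2\lambda\boldsymbol{c}^{k+1}$, i.e. \eqref{S-3'}; and \eqref{S-3} then gives the residual relation $\boldsymbol{\alpha}^{k+1}-A\boldsymbol{c}^{k+1}=\tfrac{1}{\rho}(\boldsymbol{\gamma}^{k+1}-\boldsymbol{\gamma}^{k})=\tfrac{2\lambda}{\rho}(\boldsymbol{c}^{k+1}-\boldsymbol{c}^{k})$. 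Consequently every dual increment and every constraint residual is controlled by the single quantity $\|\boldsymbol{c}^{k+1}-\boldsymbol{c}^{k}\|$, which is exactly the feature that makes the nonconvex analysis tractable.

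First I would establish sufficient decrease and boundedness, both of which hinge on $\rho>4\lambda\|A^{-1}\|$ (equivalently $\rho\,\sigma_{\min}(A)>4\lambda$, since $\|A^{-1}\|=\sigma_{\min}(A)^{-1}$ for symmetric positive definite $A$). The $\boldsymbol{\alpha}$-update \eqref{S-1'} is a genuine minimization, so it does not increase $\mathcal{L}_{\rho}$; the $\boldsymbol{c}$-subproblem \eqref{S-2'} is strongly convex with modulus $\mu=2\lambda\sigma_{\min}(A)+\rho\sigma_{\min}(A)^{2}$ (its Hessian is $2\lambda A+\rho A^{2}$), contributing a decrease of at least $\tfrac{\mu}{2}\|\boldsymbol{c}^{k+1}-\boldsymbol{c}^{k}\|^{2}$; and \eqref{S-3} raises $\mathcal{L}_{\rho}$ by exactly $\tfrac{1}{\rho}\|\boldsymbol{\gamma}^{k+1}-\boldsymbol{\gamma}^{k}\|^{2}=\tfrac{4\lambda^{2}}{\rho}\|\boldsymbol{c}^{k+1}-\boldsymbol{c}^{k}\|^{2}$. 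Summing the three contributions gives $\mathcal{L}_{\rho}(\boldsymbol{\alpha}^{k},\boldsymbol{c}^{k},\boldsymbol{\gamma}^{k})-\mathcal{L}_{\rho}(\boldsymbol{\alpha}^{k+1},\boldsymbol{c}^{k+1},\boldsymbol{\gamma}^{k+1})\ge(\tfrac{\mu}{2}-\tfrac{4\lambda^{2}}{\rho})\|\boldsymbol{c}^{k+1}-\boldsymbol{c}^{k}\|^{2}$, with the hypothesis forcing the constant strictly positive. For boundedness I would use $F\ge0$, $G\ge0$, $\boldsymbol{\gamma}^{k}=2\lambda\boldsymbol{c}^{k}$ and a Young-type inequality on the cross term to get $\mathcal{L}_{\rho}(\boldsymbol{\alpha}^{k},\boldsymbol{c}^{k},\boldsymbol{\gamma}^{k})\ge(\lambda\sigma_{\min}(A)-\tfrac{2\lambda^{2}}{\rho})\|\boldsymbol{c}^{k}\|^{2}\ge0$, where the same condition makes the coefficient positive. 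Hence the nonincreasing sequence $\{\mathcal{L}_{\rho}\}$ converges, $\{\boldsymbol{c}^{k}\}$ (and therefore $\{\boldsymbol{\gamma}^{k}\}$, $\{\boldsymbol{\alpha}^{k}\}$) is bounded, and telescoping yields $\sum_{k}\|\boldsymbol{c}^{k+1}-\boldsymbol{c}^{k}\|^{2}<\infty$, so all increments and the residual vanish.

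To invoke the KL machinery I would next produce a subgradient estimate: writing the optimality conditions of \eqref{S-1'} and \eqref{S-2'} and substituting the updates, one exhibits an element of the limiting subdifferential $\partial\mathcal{L}_{\rho}(\boldsymbol{\alpha}^{k+1},\boldsymbol{c}^{k+1},\boldsymbol{\gamma}^{k+1})$ whose norm is bounded by $C(\|\boldsymbol{c}^{k+1}-\boldsymbol{c}^{k}\|+\|\boldsymbol{c}^{k}-\boldsymbol{c}^{k-1}\|)$, using $\boldsymbol{\alpha}^{k+1}-\boldsymbol{\alpha}^{k}=A(\boldsymbol{c}^{k+1}-\boldsymbol{c}^{k})+\tfrac{2\lambda}{\rho}(\boldsymbol{c}^{k+1}-2\boldsymbol{c}^{k}+\boldsymbol{c}^{k-1})$ to reduce the $\boldsymbol{\alpha}$-increments to consecutive $\boldsymbol{c}$-increments. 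Since $L(\boldsymbol{x}_{i},y_{i},\cdot)$ is subanalytic by Assumption \ref{Assumption:4.1}(i) while $G$ and the quadratic penalty are polynomial, $\mathcal{L}_{\rho}$ is subanalytic and hence satisfies the KL inequality. Sufficient decrease, the subgradient bound, the KL property, and boundedness are precisely the hypotheses of the abstract theorems in \cite{Guo2016Convergence,Li2015Global}, which then yield global convergence of $\{(\boldsymbol{\alpha}^{k},\boldsymbol{c}^{k},\boldsymbol{\gamma}^{k})\}$ to a single limit $(\boldsymbol{\alpha}^{*},\boldsymbol{c}^{*},\boldsymbol{\gamma}^{*})$ that is a stationary point of $\mathcal{L}_{\rho}$, in particular feasible, $\boldsymbol{\alpha}^{*}=A\boldsymbol{c}^{*}$.

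Finally I would translate this back. By \eqref{3.2} the map $\boldsymbol{c}\mapsto\sum_{i}c_{i}K(\boldsymbol{x}_{i},\cdot)$ is continuous, so $s^{k}\to s^{*}=\sum_{i}(\boldsymbol{c}^{*})_{i}K(\boldsymbol{x}_{i},\cdot)$. Stationarity of $\mathcal{L}_{\rho}$ at the feasible limit reduces to $0$ belonging to the limiting subdifferential of the reduced objective of \eqref{3.4} at $\boldsymbol{c}^{*}$; here Assumption \ref{Assumption:4.1}(ii) is essential, since $0\notin\partial L(\boldsymbol{x}_{i},y_{i},0)$ forbids $\boldsymbol{\alpha}^{*}=A\boldsymbol{c}^{*}=0$, so $s^{*}\neq0$ and the Fr\'echet-derivative formula \eqref{3.1} applies, certifying that $s^{*}$ is a stationary point of Optimization \eqref{2.1} in the RKHS sense. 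The \emph{main obstacle} I anticipate is the bookkeeping around the nonsmooth block: the decrease is measured only in $\|\boldsymbol{c}^{k+1}-\boldsymbol{c}^{k}\|$ while the subdifferential bound involves two consecutive $\boldsymbol{c}$-increments, and these must be reconciled (as in \cite{Guo2016Convergence,Li2015Global}) so that the KL argument delivers summability of the full sequence; verifying the subdifferential calculus for the possibly discontinuous, nonconvex $F$ and confirming that limiting stationarity descends correctly through the composition with $A$ is the delicate part.
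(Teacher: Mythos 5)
Your proposal is correct and follows essentially the same route as the paper's own proof: the augmented Lagrangian $\mathcal{L}_{\rho}$ as the KL potential, the key identities $\boldsymbol{\gamma}^{k+1}=2\lambda\boldsymbol{c}^{k+1}$ and $\boldsymbol{\alpha}^{k+1}-A\boldsymbol{c}^{k+1}=\tfrac{2\lambda}{\rho}(\boldsymbol{c}^{k+1}-\boldsymbol{c}^{k})$, sufficient decrease and lower-boundedness under $\rho>4\lambda\|A^{-1}\|$, a subgradient estimate feeding the Kurdyka--{\L}ojasiewicz argument of \cite{Guo2016Convergence,Li2015Global}, and finally Assumption \ref{Assumption:4.1}(ii) to exclude the zero limit so that (\ref{3.1}) transfers stationarity to Optimization (\ref{2.1}). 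The only differences are cosmetic (strong convexity of the $\boldsymbol{c}$-subproblem instead of convexity of $G$, a two-increment rather than one-increment subgradient bound, and delegating the KL cascade to the cited theorems where the paper writes it out).
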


    Before we verify our main result, we need two lemmas about $\{\mathcal{L}_{\rho}(\boldsymbol{\alpha}^{k},\boldsymbol{c}^{k},\boldsymbol{\gamma}^{k})\}$.

    \begin{lemma}\label{Lemma:4.2}
       If the conditions in Theorem \text{\normalfont{\ref{Theorem:4.1}}} holds, then there exists $\delta>0$ such that the following descent inequality holds
    $$
       \delta\|s^{k+1}-s^{k}\|_{\mathcal{H}}^{2}\leq \mathcal{L}_{\rho}(\boldsymbol{\alpha}^{k},\boldsymbol{c}^{k},\boldsymbol{\gamma}^{k})-\mathcal{L}_{\rho}(\boldsymbol{\alpha}^{k+1},\boldsymbol{c}^{k+1},\boldsymbol{\gamma}^{k+1}).
    $$
    \end{lemma}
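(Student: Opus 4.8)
The plan is to use the standard three-block telescoping of the augmented Lagrangian drop across one ADMM sweep, and then to express every term in the $A$-weighted quadratic form that the reproducing-kernel identity (\ref{3.3}) attaches to the RKHS distance. Since $s^{k+1}-s^k=\sum_{i=1}^N\big((\boldsymbol{c}^{k+1})_i-(\boldsymbol{c}^k)_i\big)K(\boldsymbol{x}_i,\cdot)$, applying (\ref{3.3}) gives at once $\|s^{k+1}-s^k\|_{\mathcal{H}}^2=(\boldsymbol{c}^{k+1}-\boldsymbol{c}^k)^T A(\boldsymbol{c}^{k+1}-\boldsymbol{c}^k)$. Writing $\boldsymbol{w}:=\boldsymbol{c}^k-\boldsymbol{c}^{k+1}$, the target inequality becomes a lower bound of the Lagrangian drop by $\delta\,\boldsymbol{w}^T A\boldsymbol{w}$.

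Next I would split the drop through the two intermediate iterates:
\begin{align*}
\mathcal{L}_\rho(\boldsymbol{\alpha}^k,\boldsymbol{c}^k,\boldsymbol{\gamma}^k)-\mathcal{L}_\rho(\boldsymbol{\alpha}^{k+1},\boldsymbol{c}^{k+1},\boldsymbol{\gamma}^{k+1})
={}& \big[\mathcal{L}_\rho(\boldsymbol{\alpha}^k,\boldsymbol{c}^k,\boldsymbol{\gamma}^k)-\mathcal{L}_\rho(\boldsymbol{\alpha}^{k+1},\boldsymbol{c}^k,\boldsymbol{\gamma}^k)\big]\\
&+\big[\mathcal{L}_\rho(\boldsymbol{\alpha}^{k+1},\boldsymbol{c}^k,\boldsymbol{\gamma}^k)-\mathcal{L}_\rho(\boldsymbol{\alpha}^{k+1},\boldsymbol{c}^{k+1},\boldsymbol{\gamma}^k)\big]\\
&+\big[\mathcal{L}_\rho(\boldsymbol{\alpha}^{k+1},\boldsymbol{c}^{k+1},\boldsymbol{\gamma}^k)-\mathcal{L}_\rho(\boldsymbol{\alpha}^{k+1},\boldsymbol{c}^{k+1},\boldsymbol{\gamma}^{k+1})\big].
\end{align*}
The first bracket is nonnegative by the optimality of $\boldsymbol{\alpha}^{k+1}$ in (\ref{S-1}), so it is simply discarded. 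For the second bracket, the map $\boldsymbol{c}\mapsto\mathcal{L}_\rho(\boldsymbol{\alpha}^{k+1},\boldsymbol{c},\boldsymbol{\gamma}^k)$ is the strongly convex quadratic $G(\boldsymbol{c})+\frac{\rho}{2}\|\boldsymbol{\alpha}^{k+1}-A\boldsymbol{c}+\frac{1}{\rho}\boldsymbol{\gamma}^k\|^2$ with Hessian $2\lambda A+\rho A^2$ and minimizer $\boldsymbol{c}^{k+1}$ by (\ref{S-2''}); the exact quadratic gap therefore equals $\tfrac12\boldsymbol{w}^T(2\lambda A+\rho A^2)\boldsymbol{w}=\lambda\,\boldsymbol{w}^T A\boldsymbol{w}+\tfrac{\rho}{2}\|A\boldsymbol{w}\|^2$. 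For the third bracket only the multiplier term depends on $\boldsymbol{\gamma}$, so it equals $(\boldsymbol{\gamma}^k-\boldsymbol{\gamma}^{k+1})^T(\boldsymbol{\alpha}^{k+1}-A\boldsymbol{c}^{k+1})$, and substituting (\ref{S-3}) collapses it to $-\tfrac{1}{\rho}\|\boldsymbol{\gamma}^{k+1}-\boldsymbol{\gamma}^k\|^2$.

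The decisive step is to tie the dual increment to the primal increment in $\boldsymbol{c}$: relation (\ref{S-3'}) gives $\boldsymbol{\gamma}^{k+1}-\boldsymbol{\gamma}^k=2\lambda(\boldsymbol{c}^{k+1}-\boldsymbol{c}^k)$ for the generated multipliers, hence $\|\boldsymbol{\gamma}^{k+1}-\boldsymbol{\gamma}^k\|^2=4\lambda^2\|\boldsymbol{w}\|^2$ and the third bracket is $-\tfrac{4\lambda^2}{\rho}\|\boldsymbol{w}\|^2$. Collecting the three contributions and discarding the nonnegative $\tfrac{\rho}{2}\|A\boldsymbol{w}\|^2$ yields the lower bound $\lambda\,\boldsymbol{w}^T A\boldsymbol{w}-\tfrac{4\lambda^2}{\rho}\|\boldsymbol{w}\|^2$. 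Finally I would invoke the strict positive definiteness of $A$ in the form $\|\boldsymbol{w}\|^2\le\|A^{-1}\|\,\boldsymbol{w}^T A\boldsymbol{w}$ (because $\lambda_{\min}(A)=1/\|A^{-1}\|$ for the $2$-norm) to obtain
\[
\mathcal{L}_\rho(\boldsymbol{\alpha}^k,\boldsymbol{c}^k,\boldsymbol{\gamma}^k)-\mathcal{L}_\rho(\boldsymbol{\alpha}^{k+1},\boldsymbol{c}^{k+1},\boldsymbol{\gamma}^{k+1})\ \geq\ \Big(\lambda-\frac{4\lambda^2\|A^{-1}\|}{\rho}\Big)\boldsymbol{w}^T A\boldsymbol{w}=\delta\,\|s^{k+1}-s^k\|_{\mathcal{H}}^2,
\]
with $\delta:=\lambda-\tfrac{4\lambda^2\|A^{-1}\|}{\rho}$, which is strictly positive precisely under the hypothesis $\rho>4\lambda\|A^{-1}\|$.

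The main obstacle is the third bracket, where the dual ascent step \emph{increases} the Lagrangian, so a naive telescoping fails; the whole estimate hinges on the identity (\ref{S-3'}), itself a consequence of the stationarity (\ref{S-2''}) of the $\boldsymbol{c}$-subproblem, which lets the harmful $-\tfrac{4\lambda^2}{\rho}\|\boldsymbol{w}\|^2$ be dominated by the strong-convexity gain $\lambda\,\boldsymbol{w}^T A\boldsymbol{w}$ from the $\boldsymbol{c}$-update. The delicate bookkeeping is tracking the constant sharply enough through the Euclidean, $A$-weighted, and RKHS norms so that positivity of $\delta$ coincides exactly with the threshold $\rho>4\lambda\|A^{-1}\|$ (the identity (\ref{S-3'}) holds for every produced multiplier, so the descent inequality is valid for $k\ge1$, which suffices for the subsequent convergence analysis).
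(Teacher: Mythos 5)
Your proposal is correct, and its skeleton is the same as the paper's: the identical three-term telescoping through $(\boldsymbol{\alpha}^{k+1},\boldsymbol{c}^{k},\boldsymbol{\gamma}^{k})$ and $(\boldsymbol{\alpha}^{k+1},\boldsymbol{c}^{k+1},\boldsymbol{\gamma}^{k})$, discarding the first bracket by optimality of $\boldsymbol{\alpha}^{k+1}$, collapsing the dual bracket to $-\tfrac{1}{\rho}\|\boldsymbol{\gamma}^{k+1}-\boldsymbol{\gamma}^{k}\|^{2}$ via (\ref{S-3}), and taming it with (\ref{S-3'}). Where you diverge is in the bookkeeping, and in both places your version is sharper. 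First, for the $\boldsymbol{c}$-bracket the paper only invokes convexity of $G$ (the subgradient inequality together with (\ref{4.2})), so it retains just the augmented-penalty gain $\tfrac{\rho}{2}\|A\boldsymbol{w}\|^{2}$ and must absorb the dual increase, bounded as $\tfrac{4\lambda^{2}\|A^{-1}\|^{2}}{\rho}\|A\boldsymbol{w}\|^{2}$, into that term; you instead compute the exact quadratic gap $\lambda\boldsymbol{w}^{T}A\boldsymbol{w}+\tfrac{\rho}{2}\|A\boldsymbol{w}\|^{2}$ and let the strong-convexity part $\lambda\boldsymbol{w}^{T}A\boldsymbol{w}$ of $G$ absorb the dual increase, a genuinely different dominating mechanism. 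Second, to pass to the RKHS norm the paper appeals to equivalence of norms in finite dimensions (\cite[1.4.14 Proposition]{Megginson1998}), producing an unspecified constant $w_{1}$ and hence an inexplicit $\delta$; you use the exact kernel identity (\ref{3.3}), $\|s^{k+1}-s^{k}\|_{\mathcal{H}}^{2}=\boldsymbol{w}^{T}A\boldsymbol{w}$, which yields the explicit constant $\delta=\lambda-\tfrac{4\lambda^{2}\|A^{-1}\|}{\rho}$ whose positivity coincides exactly with the stated threshold $\rho>4\lambda\|A^{-1}\|$ (the paper's own constant is positive already for $\rho>2\sqrt{2}\lambda\|A^{-1}\|$, so its hypothesis is used with slack). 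One caveat you correctly flag, and which the paper's proof shares silently in (\ref{4.4}): the identity $\boldsymbol{\gamma}^{k}=2\lambda\boldsymbol{c}^{k}$ only holds for multipliers produced by the scheme, so the descent inequality as derived is guaranteed for $k\geq 1$, which is all the convergence analysis needs.
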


    \begin{proof}
       From (\ref{S-1}), we know that $\boldsymbol{\alpha}^{k+1}$ is the minimizer of $\mathcal{L}_{\rho}(\boldsymbol{\alpha},\boldsymbol{c}^{k},\boldsymbol{\gamma}^{k})$, that is,
     \begin{equation}\label{4.1}
        0\leq \mathcal{L}_{\rho}(\boldsymbol{\alpha}^{k},\boldsymbol{c}^{k},\boldsymbol{\gamma}^{k})-\mathcal{L}_{\rho}(\boldsymbol{\alpha}^{k+1},\boldsymbol{c}^{k},\boldsymbol{\gamma}^{k}).
     \end{equation}
Similarily from the definition of $\mathcal{L}_{\rho}$, we see that
     \begin{align*}
       &\mathcal{L}_{\rho}(\boldsymbol{\alpha}^{k+1},\boldsymbol{c}^{k},\boldsymbol{\gamma}^{k})-\mathcal{L}_{\rho}(\boldsymbol{\alpha}^{k+1},\boldsymbol{c}^{k+1},\boldsymbol{\gamma}^{k}) \\
      =&G(\boldsymbol{c}^{k})-G(\boldsymbol{c}^{k+1})+(\boldsymbol{\gamma}^{k})^{T}A(\boldsymbol{c}^{k+1}-\boldsymbol{c}^{k})+\frac{\rho}{2}\|\boldsymbol{\alpha}^{k+1}-A\boldsymbol{c}^{k}\|^{2}-\frac{\rho}{2}\|\boldsymbol{\alpha}^{k+1}-A\boldsymbol{c}^{k+1}\|^{2} \\ \notag
      =&G(\boldsymbol{c}^{k})-G(\boldsymbol{c}^{k+1})+(\boldsymbol{\gamma}^{k})^{T}A(\boldsymbol{c}^{k+1}-\boldsymbol{c}^{k})+\frac{\rho}{2}\|\boldsymbol{\alpha}^{k+1}-A\boldsymbol{c}^{k+1}\|^{2}+\frac{\rho}{2}\|A(\boldsymbol{c}^{k+1}-\boldsymbol{c}^{k})\|^{2}+ \\ \notag
       &\rho(\boldsymbol{\alpha}^{k+1}-A\boldsymbol{c}^{k+1})^{T}A(\boldsymbol{c}^{k+1}-\boldsymbol{c}^{k})-\frac{\rho}{2}\|\boldsymbol{\alpha}^{k+1}-A\boldsymbol{c}^{k+1}\|^{2} \\ \notag
      =&G(\boldsymbol{c}^{k})-G(\boldsymbol{c}^{k+1})+(\boldsymbol{\gamma}^{k+1})^{T}A(\boldsymbol{c}^{k+1}-\boldsymbol{c}^{k})+\frac{\rho}{2}\|A(\boldsymbol{c}^{k+1}-\boldsymbol{c}^{k})\|^{2}.
    \end{align*}
Substituting (\ref{S-3}) into (\ref{S-2'}), we have that
    \begin{equation}\label{4.2}
        \boldsymbol{0}=\nabla G(\boldsymbol{c}^{k+1})-A\boldsymbol{\gamma}^{k+1}.
    \end{equation}
Since $G$ is convex and differentiable on $\mathbb{R}^{N}$, from the subgradient inequality, it follows that
    $$
       0\leq G(\boldsymbol{c}^{k})-G(\boldsymbol{c}^{k+1})-(\nabla G(\boldsymbol{c}^{k+1}))^{T}(\boldsymbol{c}^{k}-\boldsymbol{c}^{k+1})=G(\boldsymbol{c}^{k})-G(\boldsymbol{c}^{k+1})+(\boldsymbol{\gamma}^{k+1})^{T}A(\boldsymbol{c}^{k+1}-\boldsymbol{c}^{k}).
    $$
From three inequalites above, we have that
    \begin{equation}\label{4.3}
       \frac{\rho}{2}\|A(\boldsymbol{c}^{k+1}-\boldsymbol{c}^{k})\|^{2}\leq \mathcal{L}_{\rho}(\boldsymbol{\alpha}^{k+1},\boldsymbol{c}^{k},\boldsymbol{\gamma}^{k})-\mathcal{L}_{\rho}(\boldsymbol{\alpha}^{k+1},\boldsymbol{c}^{k+1},\boldsymbol{\gamma}^{k}).
    \end{equation}
Furthermore, from the definition of $\mathcal{L}_{\rho}$ and (\ref{S-3}) , it follows that
    \begin{equation*}
      -\frac{1}{\rho}\|\boldsymbol{\gamma}^{k+1}-\boldsymbol{\gamma}^{k}\|^{2}=\mathcal{L}_{\rho}(\boldsymbol{\alpha}^{k+1},\boldsymbol{c}^{k+1},\boldsymbol{\gamma}^{k})-\mathcal{L}_{\rho}(\boldsymbol{\alpha}^{k+1},\boldsymbol{c}^{k+1},\boldsymbol{\gamma}^{k+1}).
    \end{equation*}
By (\ref{S-3'}), it is easy to check that
    \begin{equation}\label{4.4}
      \|\boldsymbol{\gamma}^{k+1}-\boldsymbol{\gamma}^{k}\|=2\lambda\|\boldsymbol{c}^{k+1}-\boldsymbol{c}^{k}\|= 2\lambda\|A^{-1} A(\boldsymbol{c}^{k+1}-\boldsymbol{c}^{k})\|\leq 2\lambda\|A^{-1}\| \|A(\boldsymbol{c}^{k+1}-\boldsymbol{c}^{k})\|.
    \end{equation}
Combining two relations above, we conclude that
    \begin{equation}\label{4.5}
       -\frac{4\lambda^{2}\|A^{-1}\|^{2}}{\rho}\|A(\boldsymbol{c}^{k+1}-\boldsymbol{c}^{k})\|^{2}\leq  -\frac{1}{\rho}\|\boldsymbol{\gamma}^{k+1}-\boldsymbol{\gamma}^{k}\|^{2}=\mathcal{L}_{\rho}(\boldsymbol{\alpha}^{k+1},\boldsymbol{c}^{k+1},\boldsymbol{\gamma}^{k})-\mathcal{L}_{\rho}(\boldsymbol{\alpha}^{k+1},\boldsymbol{c}^{k+1},\boldsymbol{\gamma}^{k+1}).
    \end{equation}
Hence, (\ref{4.1}), (\ref{4.3}) and (\ref{4.5}) show that
    $$
       \left(\frac{\rho}{2}-\frac{4\lambda^{2}\|A^{-1}\|^{2}}{\rho}\right)\|A(\boldsymbol{c}^{k+1}-\boldsymbol{c}^{k})\|^{2}\leq \mathcal{L}_{\rho}(\boldsymbol{\alpha}^{k},\boldsymbol{c}^{k},\boldsymbol{\gamma}^{k})-\mathcal{L}_{\rho}(\boldsymbol{\alpha}^{k+1},\boldsymbol{c}^{k+1},\boldsymbol{\gamma}^{k+1}).
    $$
Since $\rho>4\lambda \|A^{-1}\|$, we have that
    $$
       \frac{\rho}{2}-\frac{4\lambda^{2}\|A^{-1}\|^{2}}{\rho}=\frac{\rho^{2}-8\lambda^{2}\|A^{-1}\|^{2}}{2\rho}>\frac{4\lambda^{2}\|A^{-1}\|^{2}}{\rho}>0.
    $$
Thus,
    $$
       \frac{4\lambda^{2}\|A^{-1}\|^{2}}{\rho}\|A(\boldsymbol{c}^{k+1}-\boldsymbol{c}^{k})\|^{2}\leq \mathcal{L}_{\rho}(\boldsymbol{\alpha}^{k},\boldsymbol{c}^{k},\boldsymbol{\gamma}^{k})-\mathcal{L}_{\rho}(\boldsymbol{\alpha}^{k+1},\boldsymbol{c}^{k+1},\boldsymbol{\gamma}^{k+1}).
    $$
Combining \cite[1.4.14 Proposition]{Megginson1998} with (\ref{3.2}), it shows that there exists $w_{1}>0$ such that
    $$
        w_{1}\|s^{k+1}-s^{k}\|_{\mathcal{H}}\leq \|A(\boldsymbol{c}^{k+1}-\boldsymbol{c}^{k})\|.
    $$
Let $\delta=\frac{4\lambda^{2}\|A^{-1}\|^{2}(w_{1})^{2}}{\rho}$. Thus $\delta>0$ and the following descent inequality holds
    $$
    \delta\|s^{k+1}-s^{k}\|_{\mathcal{H}}^{2}\leq \mathcal{L}_{\rho}(\boldsymbol{\alpha}^{k},\boldsymbol{c}^{k},\boldsymbol{\gamma}^{k})-\mathcal{L}_{\rho}(\boldsymbol{\alpha}^{k+1},\boldsymbol{c}^{k+1},\boldsymbol{\gamma}^{k+1}).
    $$
The proof is complete.
    \end{proof}

    Moreover, Lemma \ref{Lemma:4.2} shows that $\{\mathcal{L}_{\rho}(\boldsymbol{\alpha}^{k},\boldsymbol{c}^{k},\boldsymbol{\gamma}^{k})\}$ is monotonically decreasing, that is, for any $k\in \mathbb{N}$,
    \begin{equation}\label{4.6}
       \mathcal{L}_{\rho}(\boldsymbol{\alpha}^{1},\boldsymbol{c}^{1},\boldsymbol{\gamma}^{1})\geq \mathcal{L}_{\rho}(\boldsymbol{\alpha}^{k},\boldsymbol{c}^{k},\boldsymbol{\gamma}^{k})=F(\boldsymbol{\alpha}^{k})+G(\boldsymbol{c}^{k})-\frac{1}{2\rho}\| \boldsymbol{\gamma}^{k}\|^{2}+\frac{\rho}{2}\| \boldsymbol{\alpha}^{k}-A\boldsymbol{c}^{k}+\frac{1}{\rho}(\boldsymbol{\gamma}^{k})\|^{2}.
    \end{equation}
Moreover, since $A$ is symmetric and strictly positive definite, the minimum eigenvalue of $A$ is $\frac{1}{\|A^{-1}\|}$ which is its largest possible strong convexity parameter. Hence, \cite[Example 5.19 and Theorem 5.24 (iii)]{Beck2017} show that
    \begin{equation*}
       G(\boldsymbol{c}^{k})=\lambda (\boldsymbol{c}^{k}-\boldsymbol{0})^{T}(A\boldsymbol{c}^{k}-\boldsymbol{0})\geq \frac{\lambda}{\|A^{-1}\|}\|\boldsymbol{c}^{k}\|^{2}.
    \end{equation*}
Since $F(\boldsymbol{\alpha}^{k})\geq 0$ and $\rho>4\lambda \|A^{-1}\|$, the two inequalites above and (\ref{S-3'}) provide that
    \begin{equation}\label{4.7}
       \mathcal{L}_{\rho}(\boldsymbol{\alpha}^{1},\boldsymbol{c}^{1},\boldsymbol{\gamma}^{1})\geq G(\boldsymbol{c}^{k})-\frac{1}{2\rho}\| \boldsymbol{\gamma}^{k}\|^{2} \geq \frac{\lambda}{\|A^{-1}\|} \|\boldsymbol{c}^{k}\|^{2}-\frac{2\lambda}{\rho}\| \boldsymbol{c}^{k}\|^{2} \geq \frac{\lambda}{2\|A^{-1}\|} \| \boldsymbol{c}^{k}\|^{2}\geq 0.
    \end{equation}
It means that $\{\boldsymbol{c}^{k}\}$ is bounded which ensures $\{\boldsymbol{\gamma}^{k}\}$ is also bounded by (\ref{S-3'}). Furthermore, (\ref{S-3}) shows that
    \begin{equation*}
         \| \boldsymbol{\alpha}^{k}\| \leq \|A\boldsymbol{c}^{k}\|+\frac{1}{\rho}\|\boldsymbol{\gamma}^{k}-\boldsymbol{\gamma}^{k-1}\|\leq \|A\|\|\boldsymbol{c}^{k}\|+\frac{1}{\rho}(\|\boldsymbol{\gamma}^{k}\|+\|\boldsymbol{\gamma}^{k-1}\|),
    \end{equation*}
$\{\boldsymbol{\alpha}^{k}\}$ is bounded. In conclusion, $\{(\boldsymbol{\alpha}^{k},\boldsymbol{c}^{k},\boldsymbol{\gamma}^{k})\}$ is bounded. Let $S$ be the set of subsequential limits of $\{(\boldsymbol{\alpha}^{k},\boldsymbol{c}^{k},\boldsymbol{\gamma}^{k})\}$. \cite[Theorem 3.6 and 3.7]{Rudin1976Principles} show that $S$ is nonempty compact, and
    \begin{equation}\label{4.8}
       \lim_{k\rightarrow \infty} \mathrm{dist}((\boldsymbol{\alpha}^{k},\boldsymbol{c}^{k},\boldsymbol{\gamma}^{k}),S)=0.
    \end{equation}
where $\mathrm{dist}(\cdot,\cdot)$ denotes Euclidean distance.

    \begin{lemma}\label{Lemma:4.3}
       If the conditions in Theorem \text{\normalfont{\ref{Theorem:4.1}}} hold, then

       \text{\normalfont{(i)}} $\mathcal{L}_{\rho}$ is constant on $S$

       \text{\normalfont{(ii)}} $\{\mathcal{L}_{\rho}(\boldsymbol{\alpha}^{k},\boldsymbol{c}^{k},\boldsymbol{\gamma}^{k})\}$ converges to $\mathcal{L}_{\rho}(S)$.
    \end{lemma}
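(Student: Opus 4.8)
The plan is to show that the real sequence $\{\mathcal{L}_{\rho}(\boldsymbol{\alpha}^{k},\boldsymbol{c}^{k},\boldsymbol{\gamma}^{k})\}$ converges and then to identify its limit with the value of $\mathcal{L}_{\rho}$ at every point of $S$. By Lemma \ref{Lemma:4.2} this sequence is monotonically decreasing, and the estimates leading to (\ref{4.7}) together with $F\geq 0$ show $\mathcal{L}_{\rho}(\boldsymbol{\alpha}^{k},\boldsymbol{c}^{k},\boldsymbol{\gamma}^{k})\geq 0$; hence it converges to some $\ell\geq 0$. This already reduces both (i) and (ii) to proving that $\mathcal{L}_{\rho}(\boldsymbol{\alpha}^{*},\boldsymbol{c}^{*},\boldsymbol{\gamma}^{*})=\ell$ for every subsequential limit $(\boldsymbol{\alpha}^{*},\boldsymbol{c}^{*},\boldsymbol{\gamma}^{*})\in S$.

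Before doing so I would record the asymptotic regularity of the iterates. Summing the descent inequality of Lemma \ref{Lemma:4.2} over $k$ and using the lower bound $\ell$ gives $\sum_{k}\|s^{k+1}-s^{k}\|_{\mathcal{H}}^{2}<\infty$, so $\|s^{k+1}-s^{k}\|_{\mathcal{H}}\to 0$. Through the isometric isomorphism (\ref{3.2}) this yields $\|A(\boldsymbol{c}^{k+1}-\boldsymbol{c}^{k})\|\to 0$, hence $\|\boldsymbol{c}^{k+1}-\boldsymbol{c}^{k}\|\to 0$ since $A$ is invertible, and then $\|\boldsymbol{\gamma}^{k+1}-\boldsymbol{\gamma}^{k}\|\to 0$ by (\ref{S-3'}) and $\|\boldsymbol{\alpha}^{k+1}-\boldsymbol{\alpha}^{k}\|\to 0$ by combining (\ref{S-3}) with the previous limits. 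I will need these so that, along any convergent subsequence, the index shift $k_{j}-1\to k_{j}$ is harmless, i.e. $\boldsymbol{c}^{k_{j}-1}\to\boldsymbol{c}^{*}$ and $\boldsymbol{\gamma}^{k_{j}-1}\to\boldsymbol{\gamma}^{*}$ as well.

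The core step is to fix $(\boldsymbol{\alpha}^{*},\boldsymbol{c}^{*},\boldsymbol{\gamma}^{*})\in S$ with a subsequence $(\boldsymbol{\alpha}^{k_{j}},\boldsymbol{c}^{k_{j}},\boldsymbol{\gamma}^{k_{j}})\to(\boldsymbol{\alpha}^{*},\boldsymbol{c}^{*},\boldsymbol{\gamma}^{*})$ and to prove $\mathcal{L}_{\rho}(\boldsymbol{\alpha}^{k_{j}},\boldsymbol{c}^{k_{j}},\boldsymbol{\gamma}^{k_{j}})\to \mathcal{L}_{\rho}(\boldsymbol{\alpha}^{*},\boldsymbol{c}^{*},\boldsymbol{\gamma}^{*})$. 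Since every term of $\mathcal{L}_{\rho}$ except $F(\boldsymbol{\alpha})$ is continuous, this reduces to $F(\boldsymbol{\alpha}^{k_{j}})\to F(\boldsymbol{\alpha}^{*})$. The bound $\liminf_{j}F(\boldsymbol{\alpha}^{k_{j}})\geq F(\boldsymbol{\alpha}^{*})$ is immediate from lower semi-continuity of $F$. For the reverse, I would use that $\boldsymbol{\alpha}^{k_{j}}$ minimizes (\ref{S-1'}) with data $(\boldsymbol{c}^{k_{j}-1},\boldsymbol{\gamma}^{k_{j}-1})$; testing against the feasible point $\boldsymbol{\alpha}^{*}$ gives
$$
F(\boldsymbol{\alpha}^{k_{j}})+\frac{\rho}{2}\Big\|\boldsymbol{\alpha}^{k_{j}}-A\boldsymbol{c}^{k_{j}-1}+\tfrac{1}{\rho}\boldsymbol{\gamma}^{k_{j}-1}\Big\|^{2}\leq F(\boldsymbol{\alpha}^{*})+\frac{\rho}{2}\Big\|\boldsymbol{\alpha}^{*}-A\boldsymbol{c}^{k_{j}-1}+\tfrac{1}{\rho}\boldsymbol{\gamma}^{k_{j}-1}\Big\|^{2}.
$$
By asymptotic regularity and $\boldsymbol{\alpha}^{k_{j}}\to\boldsymbol{\alpha}^{*}$, both quadratic terms converge to the same limit, leaving $\limsup_{j}F(\boldsymbol{\alpha}^{k_{j}})\leq F(\boldsymbol{\alpha}^{*})$. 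The two bounds give $F(\boldsymbol{\alpha}^{k_{j}})\to F(\boldsymbol{\alpha}^{*})$, hence $\mathcal{L}_{\rho}(\boldsymbol{\alpha}^{k_{j}},\boldsymbol{c}^{k_{j}},\boldsymbol{\gamma}^{k_{j}})\to \mathcal{L}_{\rho}(\boldsymbol{\alpha}^{*},\boldsymbol{c}^{*},\boldsymbol{\gamma}^{*})$. Since the left side is a subsequence of a sequence converging to $\ell$, we obtain $\mathcal{L}_{\rho}(\boldsymbol{\alpha}^{*},\boldsymbol{c}^{*},\boldsymbol{\gamma}^{*})=\ell$; as the point of $S$ was arbitrary this proves both $\mathcal{L}_{\rho}\equiv\ell$ on $S$ (part (i)) and $\{\mathcal{L}_{\rho}(\boldsymbol{\alpha}^{k},\boldsymbol{c}^{k},\boldsymbol{\gamma}^{k})\}\to\ell=\mathcal{L}_{\rho}(S)$ (part (ii)).

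I expect the main obstacle to be the $\limsup$ half of $F(\boldsymbol{\alpha}^{k_{j}})\to F(\boldsymbol{\alpha}^{*})$: because $F$ is merely lower semi-continuous, one cannot pass to the limit inside $F$ directly, and it is precisely the minimization characterization of $\boldsymbol{\alpha}^{k_{j}}$ in (\ref{S-1'})—combined with the vanishing of the successive differences so that the data indices $k_{j}-1$ may be replaced by their limits—that supplies the missing upper bound.
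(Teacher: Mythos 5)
Your proof is correct, and its skeleton coincides with the paper's: first, monotone decrease (Lemma \ref{Lemma:4.2}) plus the lower bound coming from (\ref{4.7}) and $F\geq 0$ give convergence of $\{\mathcal{L}_{\rho}(\boldsymbol{\alpha}^{k},\boldsymbol{c}^{k},\boldsymbol{\gamma}^{k})\}$ to some $\ell$; second, at each $(\boldsymbol{\alpha}^{*},\boldsymbol{c}^{*},\boldsymbol{\gamma}^{*})\in S$ one proves $\mathcal{L}_{\rho}(\boldsymbol{\alpha}^{*},\boldsymbol{c}^{*},\boldsymbol{\gamma}^{*})=\ell$ by a two-sided estimate, with lower semi-continuity giving ``$\leq$'' and the argmin property of the $\boldsymbol{\alpha}$-update giving ``$\geq$''. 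The difference lies in how the ``$\geq$'' half is executed. The paper tests the \emph{forward} iterate $\boldsymbol{\alpha}^{k_{j}+1}$, the minimizer of $\mathcal{L}_{\rho}(\cdot,\boldsymbol{c}^{k_{j}},\boldsymbol{\gamma}^{k_{j}})$, against $\boldsymbol{\alpha}^{*}$ (inequality (\ref{4.10})), and then identifies the limit of $\mathcal{L}_{\rho}(\boldsymbol{\alpha}^{k_{j}+1},\boldsymbol{c}^{k_{j}},\boldsymbol{\gamma}^{k_{j}})$ by squeezing it between consecutive Lagrangian values via the chain (\ref{4.1}), (\ref{4.3}), (\ref{4.5}); no control of successive differences of the iterates is needed. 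You instead use the \emph{backward} relation: $\boldsymbol{\alpha}^{k_{j}}$ minimizes (\ref{S-1'}) with data $(\boldsymbol{c}^{k_{j}-1},\boldsymbol{\gamma}^{k_{j}-1})$, so you must first establish asymptotic regularity --- $\|s^{k+1}-s^{k}\|_{\mathcal{H}}\to 0$ by telescoping the descent inequality of Lemma \ref{Lemma:4.2} against the lower bound $\ell$, hence $\|\boldsymbol{c}^{k+1}-\boldsymbol{c}^{k}\|\to 0$ and $\|\boldsymbol{\gamma}^{k+1}-\boldsymbol{\gamma}^{k}\|\to 0$ via (\ref{3.2}), invertibility of $A$ and (\ref{S-3'}) --- so that the shifted data $(\boldsymbol{c}^{k_{j}-1},\boldsymbol{\gamma}^{k_{j}-1})$ converge to the same limit point; this derivation is correct and self-contained. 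The trade-off is clear: your route needs this extra (but cheaply available) ingredient, while in return it yields the slightly stronger intermediate fact $F(\boldsymbol{\alpha}^{k_{j}})\to F(\boldsymbol{\alpha}^{*})$, i.e.\ the merely lower semi-continuous term is genuinely continuous along every convergent subsequence of iterates, after which (i) and (ii) follow in one stroke; the paper's squeeze (\ref{4.10})--(\ref{4.12}) compares only Lagrangian values and never needs that fact.
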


    \begin{proof}
    By Lemma \ref{Lemma:4.2}, (\ref{4.6}) and (\ref{4.7}), it follows that $\{\mathcal{L}_{\rho}(\boldsymbol{\alpha}^{k},\boldsymbol{c}^{k},\boldsymbol{\gamma}^{k})\}$ is monotonically decreasing and bounded. Hence, \cite[Theorem 3.24]{Rudin1976Principles} shows the convergence of $\{\mathcal{L}_{\rho}(\boldsymbol{\alpha}^{k},\boldsymbol{c}^{k},\boldsymbol{\gamma}^{k})\}$.

    For any $(\boldsymbol{\alpha}^{*},\boldsymbol{c}^{*},\boldsymbol{\gamma}^{*})\in S$, there exists a subsequence $\{(\boldsymbol{\alpha}^{k_{j}},\boldsymbol{c}^{k_{j}},\boldsymbol{\gamma}^{k_{j}})\}$ that converges to $(\boldsymbol{\alpha}^{*},\boldsymbol{c}^{*},\boldsymbol{\gamma}^{*})$. Since $F$ is L.S.C. and the other term of $\mathcal{L}_{\rho}$ is continuous, we have that $\mathcal{L}_{\rho}$ is lower-continuous. Hence, the lower semi-continuity of $\mathcal{L}_{\rho}$ at $(\boldsymbol{\alpha}^{*},\boldsymbol{c}^{*},\boldsymbol{\gamma}^{*})$ and the convergence of $\{\mathcal{L}_{\rho}(\boldsymbol{\alpha}^{k},\boldsymbol{c}^{k},\boldsymbol{\gamma}^{k})\}$ show that
    \begin{equation}\label{4.9}
       \mathcal{L}_{\rho}(\boldsymbol{\alpha}^{*},\boldsymbol{c}^{*},\boldsymbol{\gamma}^{*})\leq \liminf_{j\rightarrow \infty} \mathcal{L}_{\rho}(\boldsymbol{\alpha}^{k_{j}},\boldsymbol{c}^{k_{j}},\boldsymbol{\gamma}^{k_{j}})=\lim_{k\rightarrow \infty}\mathcal{L}_{\rho}(\boldsymbol{\alpha}^{k},\boldsymbol{c}^{k},\boldsymbol{\gamma}^{k}).
    \end{equation}

    Conversely, since $\boldsymbol{\alpha}^{k_{j}+1}$ is the minimizer of $\mathcal{L}_{\rho}(\boldsymbol{\alpha},\boldsymbol{c}^{k_{j}} ,\boldsymbol{\gamma}^{k_{j}})$, it shows that
    \begin{equation}\label{4.10}
       \mathcal{L}_{\rho}(\boldsymbol{\alpha}^{*},\boldsymbol{c}^{k_{j}},\boldsymbol{\gamma}^{k_{j}})\geq \mathcal{L}_{\rho}(\boldsymbol{\alpha}^{k_{j}+1},\boldsymbol{c}^{k_{j}},\boldsymbol{\gamma}^{k_{j}}).
    \end{equation}
From the continuity of $\mathcal{L}_{\rho}$ with respect to $\boldsymbol{c}$ and $\boldsymbol{\gamma}$, it holds
    \begin{equation}\label{4.11}
        \lim_{j\rightarrow \infty} \mathcal{L}_{\rho}(\boldsymbol{\alpha}^{*},\boldsymbol{c}^{k_{j}},\boldsymbol{\gamma}^{k_{j}})=\mathcal{L}_{\rho}(\boldsymbol{\alpha}^{*},\boldsymbol{c}^{*},\boldsymbol{\gamma}^{*}).
    \end{equation}
By (\ref{4.1}), (\ref{4.3}) and (\ref{4.5}), we have that
    \begin{equation*}
       \mathcal{L}_{\rho}(\boldsymbol{\alpha}^{k_{j}+1},\boldsymbol{c}^{k_{j}+1},\boldsymbol{\gamma}^{k_{j}+1})\leq \mathcal{L}_{\rho}(\boldsymbol{\alpha}^{k_{j}+1},\boldsymbol{c}^{k_{j}},\boldsymbol{\gamma}^{k_{j}})\leq \mathcal{L}_{\rho}(\boldsymbol{\alpha}^{k_{j}},\boldsymbol{c}^{k_{j}},\boldsymbol{\gamma}^{k_{j}}).
    \end{equation*}
Combining with two inequalities above, we verify that
     \begin{equation}\label{4.12}
    \lim\limits_{j\rightarrow \infty}\mathcal{L}_{\rho}(\boldsymbol{\alpha}^{k_{j}+1},\boldsymbol{c}^{k_{j}},\boldsymbol{\gamma}^{k_{j}})=\lim_{k\rightarrow \infty}\mathcal{L}_{\rho}(\boldsymbol{\alpha}^{k},\boldsymbol{c}^{k},\boldsymbol{\gamma}^{k}).
     \end{equation}
By \cite[Theorem 3.19]{Rudin1976Principles}, (\ref{4.10}), (\ref{4.11}) and (\ref{4.12}) show that
    \begin{equation}\label{4.13}
       \mathcal{L}_{\rho}(\boldsymbol{\alpha}^{*},\boldsymbol{c}^{*},\boldsymbol{\gamma}^{*})\geq \lim_{k\rightarrow \infty}\mathcal{L}_{\rho}(\boldsymbol{\alpha}^{k},\boldsymbol{c}^{k},\boldsymbol{\gamma}^{k}).
    \end{equation}
Finally, (\ref{4.9}) and (\ref{4.13}) assure that
    \begin{equation*}
       \mathcal{L}_{\rho}(\boldsymbol{\alpha}^{*},\boldsymbol{c}^{*},\boldsymbol{\gamma}^{*})=\lim_{k\rightarrow \infty}\mathcal{L}_{\rho}(\boldsymbol{\alpha}^{k},\boldsymbol{c}^{k},\boldsymbol{\gamma}^{k}).
    \end{equation*}
Hence, $\mathcal{L}_{\rho}$ is constant on $S$ and $\{\mathcal{L}_{\rho}(\boldsymbol{\alpha}^{k},\boldsymbol{c}^{k},\boldsymbol{\gamma}^{k})\}$ converges to $\mathcal{L}_{\rho}(S)$. The proof is complete.
    \end{proof}

    We are now ready for proving the main result of this section.

     \begin{proof}[Proof of Theorem \text{\normalfont{\ref{Theorem:4.1}}}]
     First, Lemma \ref{Lemma:4.2} and Lemma \ref{Lemma:4.3} (ii) show that for any $k$, $\mathcal{L}_{\rho}(\boldsymbol{\alpha}^{k},\boldsymbol{c}^{k},\boldsymbol{\gamma}^{k})\geq \mathcal{L}_{\rho}(S)$. We consider the following two cases:

    (I) If there exists an integer $k_{0}$ for which $\mathcal{L}_{\rho}(\boldsymbol{\alpha}^{k_{0}},\boldsymbol{c}^{k_{0}},\boldsymbol{\gamma}^{k_{0}})=\mathcal{L}_{\rho}(S)$, then for any $k>k_{0}+1$. Since $\{\mathcal{L}_{\rho}(\boldsymbol{\alpha}^{k},\boldsymbol{c}^{k},\boldsymbol{\gamma}^{k})\}$ is monotonically decreasing, we see that
    \begin{equation*}
        \mathcal{L}_{\rho}(\boldsymbol{\alpha}^{k+1},\boldsymbol{c}^{k+1},\boldsymbol{\gamma}^{k+1})=\mathcal{L}_{\rho}(\boldsymbol{\alpha}^{k},\boldsymbol{c}^{k},\boldsymbol{\gamma}^{k})=\mathcal{L}_{\rho}(S).
    \end{equation*}
Since $\delta>0$, for any $k>k_{0}+1$, Lemma \ref{Lemma:4.2} shows that $s^{k+1}=s^{k}$ which ensures that $\{s^{k}\}$ is convergent.

    (II) If $\mathcal{L}_{\rho}(\boldsymbol{\alpha}^{k},\boldsymbol{c}^{k},\boldsymbol{\gamma}^{k})>\mathcal{L}_{\rho}(S)$ for any integer $k$, then we verify the convergence of $\{s^{k}\}$ by KL property of $\mathcal{L}_{\rho}$. Combining with the nonnegativity and subanalyticity of $F$ and $G$, \cite[(I.2.1.9)]{1997Geometry} shows that $\mathcal{L}_{\rho}$ is subanalytic. Moreover, \cite{Bolte2007,Bolte2,Xu2013} assures that $\mathcal{L}_{\rho}$ is a KL function on $\mathbb{R}^{3m}$, that is, $\mathcal{L}_{\rho}$ has the KL property at each point in $\mathbb{R}^{3m}$ (see \cite[Section 2.4]{Bolte2014Proximal}). Clearly, by definition, $\mathcal{L}_{\rho}$ is a KL function on $S$. Hence, \cite[Lemma 3.6]{Bolte2014Proximal} assures that there exist $\varepsilon>0$, $\eta>0$ and a nonnegative continuous concave function $\varphi:[0,\eta)\rightarrow (0,+\infty)$ related to KL property such that

    (i) $\varphi(0)=0$ and $\varphi$ is continuously differentiable on $(0,\eta)$ with positive derivatives;

    (ii) if $\text{dist}((\boldsymbol{\alpha},\boldsymbol{c},\boldsymbol{\gamma}),S)<\varepsilon$ and $\mathcal{L}_{\rho}(S)<\mathcal{L}_{\rho}(\boldsymbol{\alpha},\boldsymbol{c},\boldsymbol{\gamma})<\mathcal{L}_{\rho}(S)+\eta$, then
    \begin{equation*}
       \varphi^{'}(\mathcal{L}_{\rho}(\boldsymbol{\alpha},\boldsymbol{c},\boldsymbol{\gamma})-\mathcal{L}_{\rho}(S))\ \text{\normalfont{dist}}(0,\partial \mathcal{L}_{\rho}(\boldsymbol{\alpha},\boldsymbol{c},\boldsymbol{\gamma})) \geq 1.
    \end{equation*}
where $\partial$ denotes the limiting subdifferential (see \cite[Definition 8.3]{Rockafellar1998}). From Lemma \ref{Lemma:4.3} (i) and (\ref{4.8}), it suffices to show that for $\varepsilon>0$ and $\eta>0$ above, there exists an integer $k_{1}$ such that for any $k>k_{1}$, we have
     \begin{equation}\label{4.14}
      \varphi^{'}(\mathcal{L}_{\rho}(\boldsymbol{\alpha}^{k},\boldsymbol{c}^{k},\boldsymbol{\gamma}^{k})-\mathcal{L}_{\rho}(S))\ \text{\normalfont{dist}}(0,\partial \mathcal{L}_{\rho}(\boldsymbol{\alpha}^{k},\boldsymbol{c}^{k},\boldsymbol{\gamma}^{k}))\geq 1.
    \end{equation}
Let $v^{k}=\mathcal{L}_{\rho}(\boldsymbol{\alpha}^{k},\boldsymbol{c}^{k},\boldsymbol{\gamma}^{k})-\mathcal{L}_{\rho}(S)>0$. From the concavity of $\varphi$, we get that
     \begin{equation*}
     \varphi^{'}(v^{k})(v^{k}-v^{k+1})\leq \varphi(v^{k})-\varphi(v^{k+1}).
    \end{equation*}
Multiplying $\text{\normalfont{dist}}(0,\partial \mathcal{L}_{\rho}(\boldsymbol{\alpha}^{k},\boldsymbol{c}^{k},\boldsymbol{\gamma}^{k}))$ on both side and using (\ref{4.14}), we obtain that
    \begin{equation}\label{4.15}
      v^{k}-v^{k+1} \leq \text{\normalfont{dist}}(0,\partial \mathcal{L}_{\rho}(\boldsymbol{\alpha}^{k},\boldsymbol{c}^{k},\boldsymbol{\gamma}^{k})) (\varphi(v^{k})-\varphi(v^{k+1})).
    \end{equation}
By \cite[8.8 Exercise (c) and 10.5 proposition]{Rockafellar1998}, it follows that
    \begin{equation*}
       \partial \mathcal{L}_{\rho}(\boldsymbol{\alpha}^{k},\boldsymbol{c}^{k},\boldsymbol{\gamma}^{k})=\partial_{\boldsymbol{\alpha}} \mathcal{L}_{\rho}(\boldsymbol{\alpha}^{k},\boldsymbol{c}^{k},\boldsymbol{\gamma}^{k})\times \nabla_{\boldsymbol{c}} \mathcal{L}_{\rho}(\boldsymbol{\alpha}^{k},\boldsymbol{c}^{k},\boldsymbol{\gamma}^{k})\times \nabla_{\boldsymbol{\gamma}} \mathcal{L}_{\rho}(\boldsymbol{\alpha}^{k},\boldsymbol{c}^{k},\boldsymbol{\gamma}^{k}),
    \end{equation*}
where
    \begin{align}\label{4.16}
         &\partial_{\boldsymbol{\alpha}} \mathcal{L}_{\rho}(\boldsymbol{\alpha}^{k},\boldsymbol{c}^{k},\boldsymbol{\gamma}^{k})=\partial F(\boldsymbol{\alpha}^{k})+\boldsymbol{\gamma}^{k}+\rho(\boldsymbol{\alpha}^{k}-A\boldsymbol{c}^{k}), \notag \\
         &\nabla_{\boldsymbol{c}} \mathcal{L}_{\rho}(\boldsymbol{\alpha}^{k},\boldsymbol{c}^{k},\boldsymbol{\gamma}^{k})=\nabla G(\boldsymbol{c}^{k}) -A\boldsymbol{\gamma}^{k}-\rho A(\boldsymbol{\alpha}^{k}-A\boldsymbol{c}^{k}),\\
         &\nabla_{\boldsymbol{\gamma}} \mathcal{L}_{\rho}(\boldsymbol{\alpha}^{k},\boldsymbol{c}^{k},\boldsymbol{\gamma}^{k})=\boldsymbol{\alpha}^{k}-A\boldsymbol{c}^{k} \notag.
    \end{align}
Invoking the optimality condition for (\ref{S-1'}) and (\ref{4.2}), we have that
    \begin{align}\label{4.17}
       -\rho \left(\boldsymbol{\alpha}^{k}-A\boldsymbol{c}^{k-1}+\frac{1}{\rho}\boldsymbol{\gamma}^{k-1}\right)&\in \partial F(\boldsymbol{\alpha}^{k}), \\
       A\boldsymbol{\gamma}^{k}&=\nabla G(\boldsymbol{c}^{k}). \notag
    \end{align}
From (\ref{4.16}), (\ref{4.17})  and (\ref{S-3}), we obtain further that
    \begin{align*}
         \boldsymbol{\alpha}^{\#}_{k}&:=\rho A(\boldsymbol{c}^{k-1}-\boldsymbol{c}^{k})+\boldsymbol{\gamma}^{k}-\boldsymbol{\gamma}^{k-1}\in \partial_{\boldsymbol{\alpha}} \mathcal{L}_{\rho}(\boldsymbol{\alpha}^{k},\boldsymbol{c}^{k},\boldsymbol{\gamma}^{k}), \notag \\
         \boldsymbol{c}^{\#}_{k}&:=A(\boldsymbol{\gamma}^{k-1}-\boldsymbol{\gamma}^{k})\in  \nabla_{\boldsymbol{c}} \mathcal{L}_{\rho}(\boldsymbol{\alpha}^{k},\boldsymbol{c}^{k},\boldsymbol{\gamma}^{k}),  \\
         \boldsymbol{\gamma}^{\#}_{k}&:=\frac{1}{\rho}(\boldsymbol{\gamma}^{k}-\boldsymbol{\gamma}^{k-1})\in \nabla_{\boldsymbol{\gamma}} \mathcal{L}_{\rho}(\boldsymbol{\alpha}^{k},\boldsymbol{c}^{k},\boldsymbol{\gamma}^{k}). \notag
    \end{align*}
Hence, $(\boldsymbol{\alpha}^{\#}_{k},\boldsymbol{c}^{\#}_{k},\boldsymbol{\gamma}^{\#}_{k})\in \partial \mathcal{L}_{\rho}(\boldsymbol{\alpha}^{k},\boldsymbol{c}^{k},\boldsymbol{\gamma}^{k})$, which ensures that
    \begin{equation}\label{4.18}
       \text{\normalfont{dist}}(0,\partial \mathcal{L}_{\rho}(\boldsymbol{\alpha}^{k},\boldsymbol{c}^{k},\boldsymbol{\gamma}^{k})) \leq \|(\boldsymbol{\alpha}^{\#}_{k},\boldsymbol{c}^{\#}_{k},\boldsymbol{\gamma}^{\#}_{k})\|\leq \| \boldsymbol{\alpha}^{\#}_{k} \|+\|\boldsymbol{c}^{\#}_{k}\|+\|\boldsymbol{\gamma}^{\#}_{k}\|.
    \end{equation}
From \cite[1.4.14 Proposition]{Megginson1998}, (\ref{4.4}) and (\ref{3.2}), we see that there exists $w_{2}>0$ such that
    $$
      \|\boldsymbol{\gamma}^{k}-\boldsymbol{\gamma}^{k-1}\|\leq 2\lambda \|A^{-1}\| \|A(\boldsymbol{c}^{k}-\boldsymbol{c}^{k-1})\|\leq 2\lambda w_{2}\|A^{-1}\| \|s^{k}-s^{k-1}\|_{\mathcal{H}}.
    $$
Inserting the inequality above into $\boldsymbol{\alpha}^{\#}_{k}$, $\boldsymbol{c}^{\#}_{k}$ and $\boldsymbol{\gamma}^{\#}_{k}$, we verify that
      \begin{align}\label{4.19}
         \| \boldsymbol{\alpha}^{\#}_{k} \| &\leq \left(\rho w_{2}+2\lambda w_{2}\|A^{-1}\|\right) \|s^{k}-s^{k-1}\|_{\mathcal{H}},\notag \\
         \|\boldsymbol{c}^{\#}_{k}\| &\leq 2\lambda w_{2}\|A^{-1}\|\|A\| \|s^{k}-s^{k-1}\|_{\mathcal{H}}, \\
         \|\boldsymbol{\gamma}^{\#}_{k}\| &\leq \frac{2\lambda w_{2}\|A^{-1}\|}{\rho}  \|s^{k}-s^{k-1}\|_{\mathcal{H}} \notag.
      \end{align}
Combining (\ref{4.18}) with (\ref{4.19}), we have that there exists $\zeta>0$ such that
      \begin{equation}\label{4.20}
         \text{\normalfont{dist}}(0,\partial \mathcal{L}_{\rho}(\boldsymbol{\alpha}^{k},\boldsymbol{c}^{k},\boldsymbol{\gamma}^{k})) \leq \zeta \|s^{k}-s^{k-1}\|_{\mathcal{H}}.
      \end{equation}
Since $\delta, \zeta>0$, by rearranging terms, whenever $k>k_{1}+1$, Lemma \ref{Lemma:4.2}, (\ref{4.15}) and (\ref{4.20}) assure that
    \begin{equation*}
     \|s^{k+1}-s^{k}\|_{\mathcal{H}} \leq \sqrt{\frac{\zeta}{\delta} \|s^{k}-s^{k-1}\|_{\mathcal{H}}(\varphi(v^{k})-\varphi(v^{k+1}))}\leq \frac{\|s^{k}-s^{k-1}\|_{\mathcal{H}}+\frac{\zeta}{\delta} (\varphi(v^{k})-\varphi(v^{k+1}))}{2}.
    \end{equation*}
By rearranging terms, we obtain further that
    $$
       \|s^{k+1}-s^{k}\|_{\mathcal{H}}\leq \|s^{k}-s^{k-1}\|_{\mathcal{H}}-\|s^{k+1}-s^{k}\|_{\mathcal{H}}+\frac{\zeta}{\delta} (\varphi(v^{k})-\varphi(v^{k+1})).
    $$
Summing up the above relation from $k=k_{1}+1,...,n$, since $\|s^{n+1}-s^{n}\|_{\mathcal{H}}>0$ and $\varphi(v^{n+1})>0$,, we see that
    \begin{align}\label{4.21}
      \sum_{k=k_{1}+1}^{n}\|s^{k+1}-s^{k}\|_{\mathcal{H}} &\leq \|s^{k_{1}+1}-s^{k_{1}}\|_{\mathcal{H}}-\|s^{n+1}-s^{n}\|_{\mathcal{H}}+ \frac{\zeta}{\delta}(\varphi(v^{k_{1}+1})-\varphi(v^{n+1})) \\
      & \leq \|s^{k_{1}+1}-s^{k_{1}}\|_{\mathcal{H}}+ \frac{\zeta}{\delta}\varphi(v^{k_{1}+1})<\infty \notag
    \end{align}
Hence, we denote $\Delta_{k}=\sum\limits_{j=k}^{\infty} \|s^{j+1}-s^{j}\|_{\mathcal{H}}$. Thus $\Delta_{k}<\infty$. For any $\varepsilon_{1}>0$, there exists an integer $k_{2}>k_{1}+1$ such that for any $j>k>k_{2}$,
    \begin{equation*}
       \|s^{j}-s^{k}\|_{\mathcal{H}} \leq \Delta_{k}-\Delta_{j}<\varepsilon_{1}.
    \end{equation*}
Therefore, $\{s^{k}\}$ is a Cauchy sequence. Since $\mathcal{H}$ is a Hilbert space which is a complete metric space, it means that $\{s^{k}\}$ is convergent.

    Combining (I) with (II), we conclude that $\{s^{k}\}$ converges to $s^{*}$. Moreover, the sequence $\{(\boldsymbol{\alpha}^{k},\boldsymbol{c}^{k},\boldsymbol{\gamma}^{k})\}$ converges to $(\boldsymbol{\alpha}^{*},\boldsymbol{c}^{*},\boldsymbol{\gamma}^{*})$ and $s^{*}=\sum\limits_{i=1}^{N}(\boldsymbol{c}^{*})_{i}K(\boldsymbol{x}_{i},\cdot)$. Next we verify that $s^{*}$ is a stationary point of Optimization (\ref{2.1}). First,  (\ref{S-3}) shows that
    \begin{equation}\label{4.22}
       \boldsymbol{0}=\boldsymbol{\alpha}^{*}-A\boldsymbol{c}^{*},
    \end{equation}
which ensures that
    \begin{equation*}
        \mathcal{L}_{\rho}(\boldsymbol{\alpha}^{*},\boldsymbol{c}^{*},\boldsymbol{\gamma}^{*})=F(\boldsymbol{\alpha}^{*})+G(\boldsymbol{c}^{*}).
    \end{equation*}
By the definition of $\mathcal{L}_{\rho}$, Lemma \ref{Lemma:4.3} (ii) and the continuity of $G$, we find that
    \begin{equation*}
      \lim_{k\rightarrow \infty}F(\boldsymbol{\alpha}^{k})=\lim_{k\rightarrow \infty} \mathcal{L}_{\rho}(\boldsymbol{\alpha}^{k},\boldsymbol{c}^{k},\boldsymbol{\gamma}^{k})-G(\boldsymbol{c}^{k})-(\boldsymbol{\gamma}^{k})^{T}(\boldsymbol{\alpha}^{k}-A\boldsymbol{c}^{k})-\frac{\rho}{2}\|\boldsymbol{\alpha}^{k}-A\boldsymbol{c}^{k}\|^2=F(\boldsymbol{\alpha}^{*}).
    \end{equation*}
By \cite[proposition 8.7]{Rockafellar1998} and (\ref{4.17}), passing to the limit above along $\{(\boldsymbol{\alpha}^{k},\boldsymbol{c}^{k},\boldsymbol{\gamma}^{k})\}$, it follows that
    \begin{equation}\label{4.23}
       -\boldsymbol{\gamma}^{*}\in \partial F(\boldsymbol{\alpha}^{*})=\partial F(A\boldsymbol{c}^{*}).
    \end{equation}
Thus, (\ref{3.2}) and (\ref{4.23}) ensure that
    \begin{equation}\label{4.24}
       -2\lambda s^{*}\in \partial \left(\frac{1}{N} \sum_{i=1}^{N}L(\boldsymbol{x}_{i},y_{i},\cdot)\right)(s^{*}).
    \end{equation}
On the other hand, we suppose that $\boldsymbol{c}^{*}=\boldsymbol{0}$. Thus (\ref{4.24}) and (\ref{S-3'}) assure that $\boldsymbol{\alpha}^{*}=\boldsymbol{0}$ and $\boldsymbol{\gamma}^{*}=\boldsymbol{0}$. Moreover, (\ref{4.24}) shows that
    \begin{equation}\label{4.25}
       \boldsymbol{0}\in \partial F(\boldsymbol{0}).
    \end{equation}
Since $F$ can be split of the variable into subvectors, Assumption \ref{Assumption:4.1} (ii) and \cite[proposition 10.5]{Rockafellar1998} and  \cite[D. Rescaling]{Rockafellar1998} assure that
    \begin{equation}\label{4.26}
       \boldsymbol{0}\notin \partial F(\boldsymbol{0})=\left(\frac{1}{N}\partial L(\boldsymbol{x}_{1},y_{1},0)\right)\times\cdots \times \left(\frac{1}{N}\partial L(\boldsymbol{x}_{N},y_{N},0)\right).
    \end{equation}
Clearly, (\ref{4.25}) and (\ref{4.26}) are contradiction. Hence, $\boldsymbol{c}^{*}\neq \boldsymbol{0}$, which ensures that $s^{*}\neq 0$. Thus, (\ref{3.1}) assures that
    \begin{equation}\label{4.27}
       \nabla(\lambda \|\cdot\|_{\mathcal{H}}^{2})(s^{*})=2\lambda\|s^{*}\|_{\mathcal{H}} \nabla(\|\cdot\|_{\mathcal{H}})(s^{*})=2\lambda\|s^{*}\|_{\mathcal{H}} \frac{s^{*}}{\|s^{*}\|_{\mathcal{H}}} =2\lambda s^{*}.
    \end{equation}
Therefore, \cite[Definition 1.77 and Proposition 1.107]{Mordukhovich2006}, (\ref{4.25}) and (\ref{4.28}) show that
    $$
       0\in \partial \left(\frac{1}{N} \sum_{i=1}^{N}L(\boldsymbol{x}_{i},y_{i},\cdot)+\lambda\|\cdot\|_{\mathcal{H}}^{2}\right)(s^{*})=\partial \left(\frac{1}{N} \sum_{i=1}^{N}L(\boldsymbol{x}_{i},y_{i},\cdot)\right)(s^{*})+\nabla(\lambda \|\cdot\|_{\mathcal{H}}^{2})(s^{*}),
    $$
that is, $s^{*}$ is a stationary point of Optimization (\ref{2.1}). The proof is complete.
   \end{proof}

    Next we analyze the convergent rates of $\{s^{k}\}$. By \cite[Example 5.3]{Bolte2014Proximal} and \cite[Theorem 3.1]{Bolte2007}, we know that $\varphi$ has the following form
    \begin{equation*}
       \varphi(z)=ez^{1-\theta}, \ \ \text{\normalfont{for}}\ e>0,\ \theta \in [0,1).
    \end{equation*}
Moreover, we have the following theorem about convergent rates.

    \begin{proposition}\label{Proposition:4.2}
     If the conditions in Theorem \text{\normalfont{\ref{Theorem:4.1}}} hold, then we have the following estimations:

     \text{\normalfont{(i)}} If $\theta=0$, then there exists an integer $n_{1}$ such that for $k>n_{1}$, $s^{k}=s^{*}$.

     \text{\normalfont{(ii)}} If $\theta \in (0,\dfrac{1}{2}]$, then there exists an integer $n_{2}$, $C_{1}>0$ and $\xi \in [0,1)$ such that for $k>n_{2}$,
     \begin{equation*}
         \| s^{k}-s^{*}\|_{\mathcal{H}}\leq C_{1}\xi^{k}.
     \end{equation*}

     \text{\normalfont{(iii)}} If $\theta \in (\dfrac{1}{2},1)$, then there exists an integer $n_{3}$ and $C_{2}>0$  such that for $k>n_{3}$,
     \begin{equation*}
        \| s^{k}-s^{*}\|_{\mathcal{H}}\leq C_{2} k^{\frac{1-\theta}{1-2\theta}}.
     \end{equation*}
    \end{proposition}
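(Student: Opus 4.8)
The plan is to control the whole analysis through the tail sum $\Delta_k:=\sum_{j=k}^{\infty}\|s^{j+1}-s^j\|_{\mathcal{H}}$, which is finite by (\ref{4.21}). Since the bound $\|s^{j}-s^{k}\|_{\mathcal{H}}\le\Delta_k-\Delta_j$ was already obtained in the proof of Theorem \ref{Theorem:4.1}, letting $j\to\infty$ gives $\|s^{k}-s^{*}\|_{\mathcal{H}}\le\Delta_k$, so it suffices to estimate the decay of $\Delta_k$. Writing $d_k=\|s^{k+1}-s^k\|_{\mathcal{H}}$ and $v^k=\mathcal{L}_\rho(\boldsymbol{\alpha}^k,\boldsymbol{c}^k,\boldsymbol{\gamma}^k)-\mathcal{L}_\rho(S)$, I would recycle two inequalities from that proof: the one-step estimate established just before (\ref{4.21}), i.e. $2d_k\le d_{k-1}+\tfrac{\zeta}{\delta}(\varphi(v^k)-\varphi(v^{k+1}))$, which after summation from $K$ to $\infty$ yields the master inequality $\Delta_K\le d_{K-1}+\tfrac{\zeta}{\delta}\varphi(v^K)$; and the consequence $\varphi'(v^K)\ge 1/(\zeta d_{K-1})$ of the KL inequality (\ref{4.14}) combined with (\ref{4.20}). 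Because case (I) in the proof of Theorem \ref{Theorem:4.1} already gives an eventually constant sequence, I may assume $v^k>0$ for every $k$ throughout.

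For (i), with $\theta=0$ we have $\varphi(z)=ez$ and $\varphi'\equiv e$, so $\varphi'(v^K)\ge 1/(\zeta d_{K-1})$ forces $d_{K-1}\ge 1/(e\zeta)$ for all large $K$. This contradicts $d_k\to0$, which holds because $\sum_k d_k<\infty$. Hence the standing assumption $v^k>0$ cannot hold for every $k$, so we are necessarily in case (I), and there is $n_1$ with $s^k=s^*$ for $k>n_1$.

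For $\theta\in(0,1)$, inserting $\varphi(z)=ez^{1-\theta}$ into $\varphi'(v^K)\ge 1/(\zeta d_{K-1})$ gives $(v^K)^{\theta}\le e(1-\theta)\zeta\,d_{K-1}$, hence $\varphi(v^K)\le M\,d_{K-1}^{(1-\theta)/\theta}$ for a constant $M>0$. Substituting into the master inequality and using $d_{K-1}=\Delta_{K-1}-\Delta_K$ produces the recursion
\[
\Delta_K\le(\Delta_{K-1}-\Delta_K)+\tfrac{\zeta M}{\delta}(\Delta_{K-1}-\Delta_K)^{(1-\theta)/\theta}.
\]
When $\theta\in(0,\tfrac12]$ the exponent $(1-\theta)/\theta\ge1$; since $d_{K-1}\to0$, for large $K$ we have $(\Delta_{K-1}-\Delta_K)^{(1-\theta)/\theta}\le\Delta_{K-1}-\Delta_K$, and the recursion collapses to $\Delta_K\le b(\Delta_{K-1}-\Delta_K)$ with $b=1+\zeta M/\delta$, i.e. $\Delta_K\le\xi\Delta_{K-1}$ with $\xi=b/(1+b)\in(0,1)$. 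This geometric decay, together with $\|s^k-s^*\|_{\mathcal{H}}\le\Delta_k$, proves (ii).

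For (iii) with $\theta\in(\tfrac12,1)$ the exponent $p:=(1-\theta)/\theta$ lies in $(0,1)$, so for large $K$ one has $d_{K-1}\le d_{K-1}^{\,p}$ and the recursion becomes $\Delta_K\le b(\Delta_{K-1}-\Delta_K)^{p}$, equivalently $\Delta_{K-1}-\Delta_K\ge b^{-1/p}\Delta_K^{1/p}$ with $1/p=\theta/(1-\theta)>1$. The decisive step is to turn this discrete differential inequality for the nonincreasing null sequence $\{\Delta_K\}$ into a polynomial rate; I would invoke the standard sequence lemma used for KL rates (see \cite{Bolte2014Proximal}), giving $\Delta_K\le C_2K^{-1/(1/p-1)}$, and the identity $-1/(1/p-1)=-(1-\theta)/(2\theta-1)=(1-\theta)/(1-2\theta)$ finishes (iii). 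I expect this last lemma to be the main obstacle: unlike the geometric case it is not a one-step contraction, so one must compare $\Delta_K^{1-1/p}$ across consecutive indices (or against $\int t^{-1/p}\,dt$) to pin down the exact exponent, and one must also track that the absorption $(\Delta_{K-1}-\Delta_K)^{p}\ge d_{K-1}$ only activates once $d_{K-1}<1$; consequently all constants $C_1,C_2$ and thresholds $n_1,n_2,n_3$ are fixed only after the iterates have entered the KL neighbourhood of $S$.
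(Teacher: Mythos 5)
Your proposal is correct and follows essentially the same route as the paper's own proof: bounding $\|s^{k}-s^{*}\|_{\mathcal{H}}$ by the tail sum $\Delta_{k}$, obtaining the master recursion $\Delta_{k}\leq(\Delta_{k-1}-\Delta_{k})+q(\Delta_{k-1}-\Delta_{k})^{\frac{1-\theta}{\theta}}$ from the KL inequality together with (\ref{4.20}), the same contradiction argument forcing case (I) when $\theta=0$, and the same geometric absorption when $\theta\in(0,\frac{1}{2}]$. The only divergence is in case (iii), where you outsource the final step to the standard KL sequence lemma; the paper proves that step inline by exactly the integral-comparison argument you sketch (splitting on whether $\Delta_{k}^{-\theta/(1-\theta)}\leq r\,\Delta_{k-1}^{-\theta/(1-\theta)}$ and telescoping $\Delta_{k}^{\frac{1-2\theta}{1-\theta}}$), arriving at the same exponent $\frac{1-\theta}{1-2\theta}$.
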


     \begin{proof}
    First we consider the case that $\theta=0$. Suppose that $\{\mathcal{L}_{\rho}(\boldsymbol{\alpha}^{k},\boldsymbol{c}^{k},\boldsymbol{\gamma}^{k})\}$ satisfying the case (II) in Theorem \ref{Theorem:4.1}. First, the definition of $\Delta_{k}$, we have that
    \begin{equation}\label{4.28}
         \|s^{k+1}-s^{k}\|_{\mathcal{H}}\to 0.
    \end{equation}
Moreover, (\ref{4.20}) and (\ref{4.28}) assure that when $k$ is sufficient largely,
    \begin{equation}\label{4.29}
        \text{\normalfont{dist}}(0,\partial \mathcal{L}_{\rho}(\boldsymbol{\alpha}^{k},\boldsymbol{c}^{k},\boldsymbol{\gamma}^{k}))\leq \zeta\|s^{k}-s^{k-1}\|_{\mathcal{H}}<\frac{1}{e}.
    \end{equation}
On the other hand, by the definition of $\varphi$, we have that $\varphi'(v^{k})=e$. Hence, (\ref{4.17}) shows that
    \begin{equation}\label{4.30}
        e \cdot \text{\normalfont{dist}}(0,\partial \mathcal{L}_{\rho}(\boldsymbol{\alpha}^{k},\boldsymbol{c}^{k},\boldsymbol{\gamma}^{k}))\geq 1.
    \end{equation}
Clearly, (\ref{4.29}) and (\ref{4.30}) are contradiction. Therefore, $\{\mathcal{L}_{\rho}(\boldsymbol{\alpha}^{k},\boldsymbol{c}^{k},\boldsymbol{\gamma}^{k})\}$ satisfies the case (I) in Theorem \ref{Theorem:4.1}, that is, there exists an integer $n_{1}$ such that whenever $k>n_{1}$, $s^{k}=s^{*}$. Item (i) holds.

    Next we consider the case that $\theta\in (0,1)$. It is easy to check that if $\{\mathcal{L}_{\rho}(\boldsymbol{\alpha}^{k},\boldsymbol{c}^{k},\boldsymbol{\gamma}^{k})\}$ satisfies the case (I) in Theorem 4.1, the assertion holds. If $\{\mathcal{L}_{\rho}(\boldsymbol{\alpha}^{k},\boldsymbol{c}^{k},\boldsymbol{\gamma}^{k})\}$ satisfies the case (II) in Theorem 4.1, then $\|s^{k+1}-s^{k}\|_{\mathcal{H}}>0$. Passing to the limit along the sequence $\{s^{k}\}$, the triangle inequality and (\ref{4.21}) ensure that for any $k>k_{1}$
    \begin{equation}\label{4.31}
      \|s^{k}-s^{*}\|_{\mathcal{H}} \leq \Delta_{k} \leq \Delta_{k-1}-\Delta_{k}+ \frac{\zeta}{\delta}\varphi(v^{k})=\Delta_{k-1}-\Delta_{k}+ \frac{\zeta}{\delta}e(v^{k})^{1-\theta}.
    \end{equation}
Since $\varphi'(v^{k})=\frac{e(1-\theta)}{(v^{k})^{\theta}}$, (\ref{4.15}) and (\ref{4.21}) assure that
    $$
        (v^{k})^{1-\theta}=((v^{k})^{\theta})^{\frac{1-\theta}{\theta}}\leq \left(e(1-\theta)\text{dist}(0,\partial \mathcal{L}_{\rho}(\boldsymbol{\alpha}^{k},\boldsymbol{c}^{k},\boldsymbol{\gamma}^{k}))\right)^{\frac{1-\theta}{\theta}} \leq \left(e(1-\theta)\zeta(\Delta_{k-1}-\Delta_{k})\right)^{\frac{1-\theta}{\theta}} .
    $$
Let $q=\frac{\zeta}{\delta}e[e(1-\theta)\zeta]^{\frac{1-\theta}{\theta}}$. Thus $q>0$. Combining with two inequalities above, we have that
    \begin{equation}\label{4.32}
      \Delta_{k} \leq \Delta_{k-1}-\Delta_{k}+ \frac{\zeta}{\delta}e[e(1-\theta)\zeta]^{\frac{1-\theta}{\theta}}(\Delta_{k-1}-\Delta_{k})^{\frac{1-\theta}{\theta}}=\Delta_{k-1}-\Delta_{k}+q(\Delta_{k-1}-\Delta_{k})^{\frac{1-\theta}{\theta}}.
    \end{equation}
Moreover, the definition of $\Delta_{k}$ ensures that there exists an integer $k_{3}$ such that for any $k>k_{3}$, we have that
    $$
       \Delta_{k-1}-\Delta_{k}<1.
    $$

    If $\theta \in (0,\frac{1}{2}]$, then $\frac{1-\theta}{\theta}\geq 1$. We denote $n_{2}=\max\{k_{1},k_{3}\}$. If $k>n_{2}$, then (\ref{4.32}) shows that
    \begin{equation}\label{4.33}
      \Delta_{k} \leq (1+q) (\Delta_{k-1}-\Delta_{k}).
    \end{equation}
This implies that $\Delta_{k} \leq \frac{q}{1+q} \Delta_{k-1}$. Let $C_{1}=(\frac{q}{1+q})^{-1-n_{2}} \Delta_{n_{2}}$ and $\xi=\frac{q}{1+q}$. Thus $C_{1}>0$, and $\xi \in [0,1)$. Combining (\ref{4.31}) with (\ref{4.33}), we show that
    $$
      \|s^{k}-s^{*}\|_{\mathcal{H}}\leq \Delta_{k} \leq \left(\frac{q}{1+q}\right)^{k-1-n_{2}}\Delta_{n_{2}}=C_{1}\xi^{k}.
    $$
Item (ii) holds.

    If $\theta \in (\frac{1}{2},1)$, then $\frac{1-\theta}{\theta}<1$. Whenever $k>n_{2}$, it follows that
    \begin{equation*}
       1\leq (1+q)^{\frac{\theta}{1-\theta}}(\Delta_{k-1}-\Delta_{k})\Delta_{k}^{-\frac{\theta}{1-\theta}}.
    \end{equation*}
Let $r\in (1,\infty)$. First we assume that ${\Delta_{k}^{-\frac{\theta}{1-\theta}}}\leq r{\Delta_{k-1}^{-\frac{\theta}{1-\theta}}}$, it holds that
    \begin{equation*}
        (\Delta_{k-1}-\Delta_{k})\Delta_{k}^{-\frac{\theta}{1-\theta}}\leq r(\Delta_{k-1}-\Delta_{k})\Delta_{k-1}^{-\frac{\theta}{1-\theta}}\leq r \int_{\Delta_{k}}^{\Delta_{k-1}}z^{-\frac{\theta}{1-\theta}}\text{d}z=\frac{1-\theta}{2\theta-1}r[\Delta_{k}^{\frac{1-2\theta}{1-\theta}}-\Delta_{k-1}^{\frac{1-2\theta}{1-\theta}}].
    \end{equation*}
Combining with two inequalities above, we find that
    \begin{equation*}
      \frac{2\theta-1}{(1-\theta)r}(1+q)^{-\frac{\theta}{1-\theta}}\leq \Delta_{k}^{\frac{1-2\theta}{1-\theta}}-\Delta_{k-1}^{\frac{1-2\theta}{1-\theta}}.
    \end{equation*}
Next we assume that ${\Delta_{k}^{-\frac{\theta}{1-\theta}}}>r{\Delta_{k-1}^{-\frac{\theta}{1-\theta}}}$. Since $-\frac{1-2\theta}{\theta}>0$, we have ${\Delta_{k}^{\frac{1-2\theta}{1-\theta}}}>r^{\frac{2\theta-1}{\theta}}{\Delta_{k-1}^{\frac{1-2\theta}{1-\theta}}}$. This ensures that
    \begin{equation*}
        (r^{\frac{2\theta-1}{\theta}}-1)\Delta_{n_{2}}^{\frac{1-2\theta}{1-\theta}}\leq (r^{\frac{2\theta-1}{\theta}}-1)\Delta_{k-1}^{\frac{1-2\theta}{1-\theta}}\leq \Delta_{k}^{\frac{1-2\theta}{1-\theta}}-\Delta_{k-1}^{\frac{1-2\theta}{1-\theta}}.
    \end{equation*}
Let $\mu=\min\left\{\frac{2\theta-1}{(1-\theta)r}(1+q)^{-\frac{\theta}{1-\theta}},(r^{\frac{2\theta-1}{\theta}}-1)\Delta_{n_{2}}^{\frac{1-2\theta}{1-\theta}}\right\}$. Thus $\mu>0$, and
    \begin{equation}\label{4.34}
      \mu \leq \Delta_{k}^{\frac{1-2\theta}{1-\theta}}-\Delta_{k-1}^{\frac{1-2\theta}{1-\theta}}.
    \end{equation}
Since $\frac{1-\theta}{1-2\theta}<0$, summing up the above relation from $j=n_{2},...,k-1$ and rearranging terms, (\ref{4.31}) and (\ref{4.34}) show that there exists $C_{2}>0$ such that if $k>n_{3}=n_{2}$,
    $$
       \|s^{k}-s^{*}\|_{\mathcal{H}} \leq \Delta_{k}\leq [\Delta_{n_{2}}^{\frac{1-2\theta}{1-\theta}}+(k-n_{2}+1)\mu]^{\frac{1-\theta}{1-2\theta}}\leq C_{2}k^{\frac{1-\theta}{1-2\theta}}.
    $$
Item (iii) follows immediately. The proof is complete.
     \end{proof}

   \section{Numerical Examples}
   \label{sec:5}
   In this section, we test Algorithm \ref{alg:ADMM} by the synthetic data and the real data. We choose some training data and testing data, loss functions and kernels to test Algorithm \ref{alg:ADMM}. For simplicity, let $L_{1}$, $L_{2}$, $L_{3}$ and $L_{4}$ be four loss functions used in our experiments, that is,
    $$
      L_{1}(\boldsymbol{x},y,t)= \begin{cases} 1-yt, & yt-1<0 \\ 0, & yt-1\geq 0 \end{cases},\
       \ L_{2}(\boldsymbol{x},y,t)= \begin{cases} -yt+2, & yt-1<-1  \\ -2yt+2, & -1\leq yt-1<0  \\ 0, & yt-1 \geq 0 \end{cases},
    $$
and
    $$
      L_{3}(\boldsymbol{x},y,t)= \begin{cases} log(2-yt), & yt-1<0 \\ 0, & yt-1 \geq 0  \end{cases},\ \ L_{4}(\boldsymbol{x},y,t)= \begin{cases} 1, & yt-1<-1 \\ 1-yt, & -1\leq yt-1<0 \\ 0, & yt-1 \geq 0  \end{cases}.
    $$
Here are the graphs of these support vector loss functions above.
    \begin{figure}[H]
    \centering
    \includegraphics[width=4.2 in]{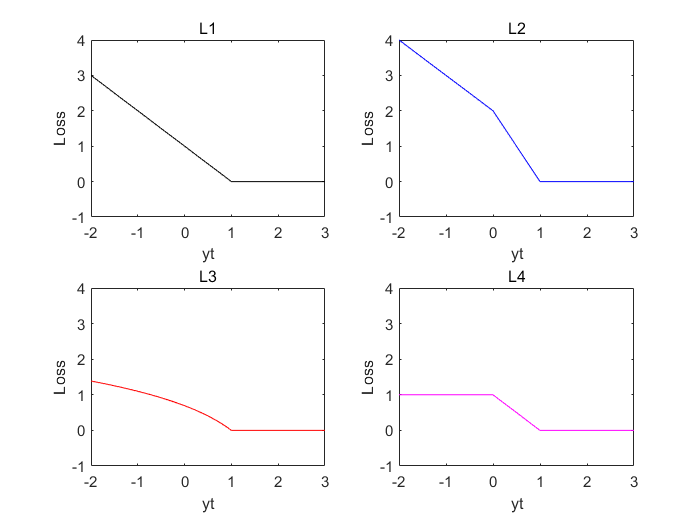}
    \caption{For the graphs of the loss functions $L_{1}$, $L_{2}$, $L_{3}$ and $L_{4}$, we replace $yt$ to $t$ because of the symmetry of $y=+1$ and $y=-1$.}
    \end{figure}

    We see that $L_{1}$ is convex Hinge loss, $L_{2}$ is a nonconvex linear piecewise loss function, $L_{3}$ is a nonconvex piecewise logarithmic loss function and $L_{4}$ is a nonconvex ramp loss function. These four loss functions satisfy Assumption \ref{Assumption:4.1}. On the other hand, let $K_{1}$ be Gaussian kernel, that is,
    \begin{equation*}
       K_{1}(\boldsymbol{x},\boldsymbol{x}')=\exp(-\sigma_{1} \| \boldsymbol{x}-\boldsymbol{x}'\|_{2}^{2}),\ \text{for}\ \sigma_{1}>0
    \end{equation*}
and $K_{2}$  be Mat\'ern 1-norm kernel, that is,
    \begin{equation*}
       K_{2}(\boldsymbol{x},\boldsymbol{x}')=\exp(-\sigma_{2} \| \boldsymbol{x}-\boldsymbol{x}' \|_{1}),\ \text{for} \ \sigma_{2}>0,
    \end{equation*}
where $\|\cdot \|_{1}$ denotes 1-norm in Euclidean space. Moreover, these two kernels are symmetric and strictly positive definite. Next we introduce our test results on synthetic data and real data.

\subsection{Examples on Synthetic Data}
    We sample from $\Omega_{1}=[-3,10]\times[-3,10]$ labeled by $+1$ and $\Omega_{2}=[-10,3]\times[-10,3]$ labeled by $-1$ randomly to obtain different training sets and testing sets. The data labeled by $+1$ are equal to the data labeled by $-1$ in each training set or testing set. Here is an example of sampling. In the following figures, two subdatasets are colored in blue and red.

    \begin{figure}[H]
	\centering
	\subfloat[Training Set]{
		\includegraphics[width=2.1 in]{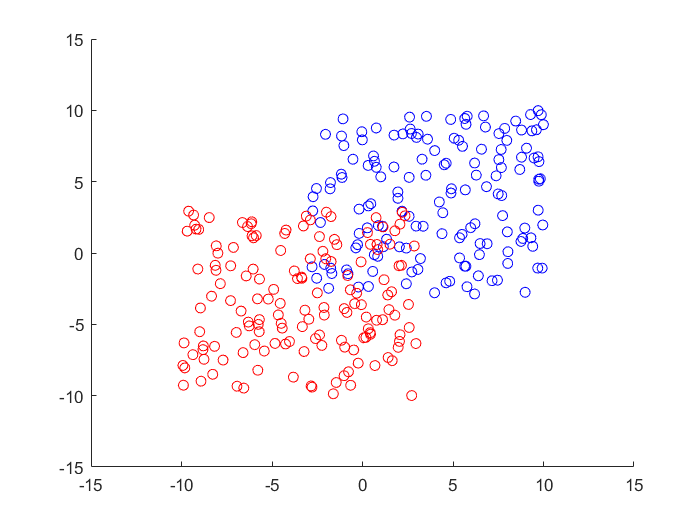}
		
    }
	\subfloat[Testing Set]{
		\includegraphics[width=2.1 in]{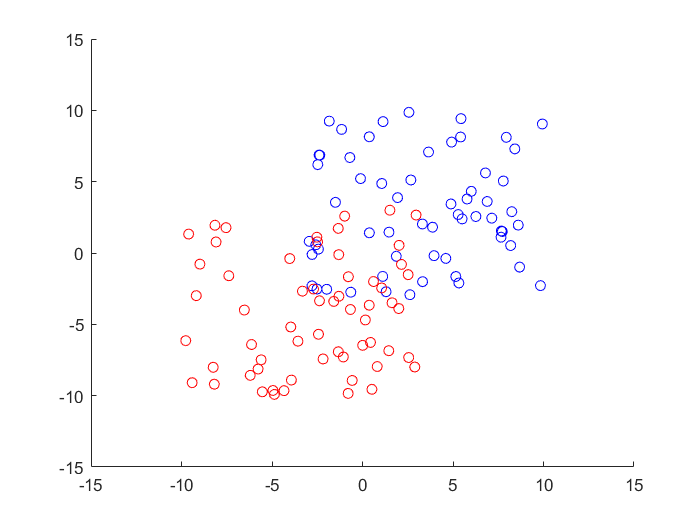}
		
	}
	\caption{Example of Sampling $(N=300)$.}
    \end{figure}

     First, we show the convergence of Algorithm \ref{alg:ADMM} for the nonconvex loss function $L_{3}$. Some parameters and results of the numerical experiment represent as follows.

     \begin{itemize}
      \setlength{\itemsep}{0pt}
      \setlength{\parskip}{0pt}
       \item Gaussian kernel $K_{1}$, where $\sigma_{1}=1$.
       \item The nonconvex loss function $L_{3}$.
       \item $N=300$.
       \item $\lambda=0.1$, $\rho=0.05$ and $\varepsilon_{0}=10^{-12}$.
       \item we choose initial values randomly in $[-10,10]^{N}$.
     \end{itemize}

     From (\ref{4.22}), we can show the convergence of $\{s^{k}\}$ by $\sum\limits_{k=1}^{\infty} \|s^{k+1}-s^{k}\|_{\mathcal{H}}$. By definition of $\{s^{k}\}$, (\ref{3.1}) and (\ref{3.2}), we have that
     $$
        \|s^{k+1}-s^{k}\|_{\mathcal{H}}=\sqrt{\sum_{i=1}^{N} (\boldsymbol{c}^{k+1}-\boldsymbol{c}^{k})_{i}(s^{k+1}-s^{k})(\boldsymbol{x}_{i})}=\sqrt{(\boldsymbol{c}^{k+1}-\boldsymbol{c}^{k})^{T}A(\boldsymbol{c}^{k+1}-\boldsymbol{c}^{k})}.
     $$
In the following two picture, we show the convergence of Algorithm 1 by $\sum\limits_{k=1}^{\infty} \|s^{k+1}-s^{k}\|_{\mathcal{H}}$.
     \begin{figure}[H]
	\centering

	\subfloat[$\|s^{k+1}-s^{k}\|_{\mathcal{H}}$]{
		\includegraphics[width=2.1 in]{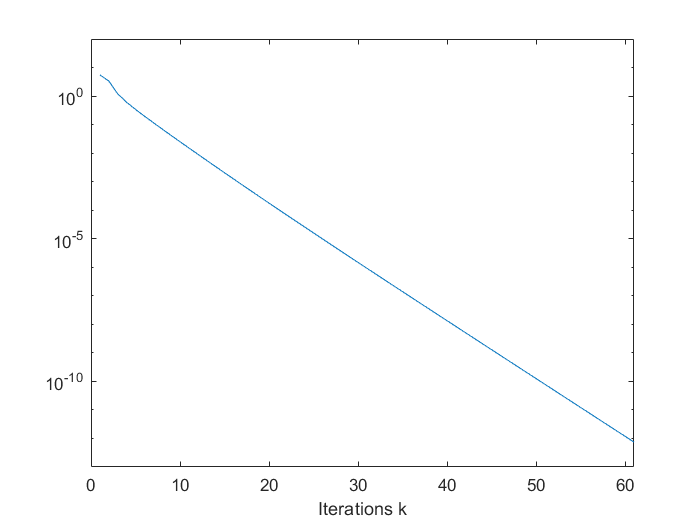}
		
    }
	\subfloat[$\sum_{k=1}^{\infty} \|s^{k+1}-s^{k}\|_{\mathcal{H}}$]{
		\includegraphics[width=2.1 in]{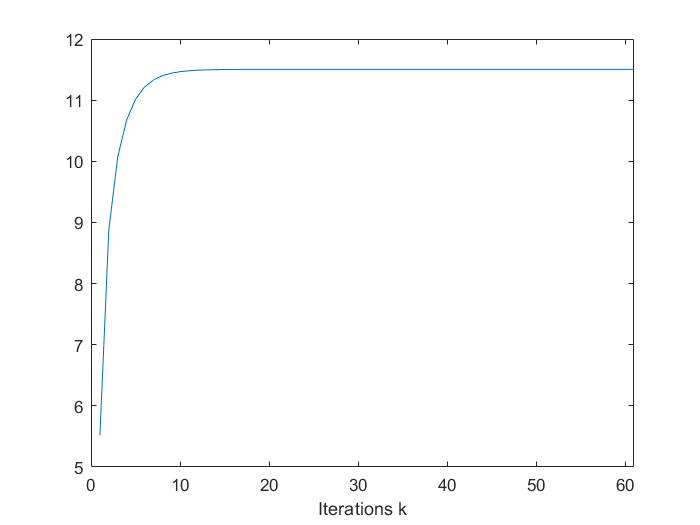}
		
	}
	\caption{Convergence of Algorithm 1 with Nonconvex Loss Function $L_{3}$.}
    \end{figure}

     Figure 3 shows that for the training data and parameters, Algorithm \ref{alg:ADMM} converges in 61 iterations. This shows the effectiveness of Algorithm \ref{alg:ADMM}. Next, we use different sizes of training set and testing set to test Algorithm \ref{alg:ADMM}. Some parameters and results of the numerical experiment represents as follows.

     \begin{itemize}
      \setlength{\itemsep}{0pt}
      \setlength{\parskip}{0pt}
       \item Gaussian kernel $K_{1}$, where $\sigma_{1}=1$.
       \item The nonconvex loss function $L_{2}$.
       \item $N=300$.
       \item $\lambda=0.1$, $\rho=1$ and $\varepsilon_{0}=10^{-12}$.
       \item we choose 20 initial values randomly in $[-10,10]^{N}$.
     \end{itemize}

    \begin{table}[H]
      \footnotesize
      \caption{Comparison of Different Sizes of Data.}
      \centering
      {\begin{tabular}{ccccc} \hline
                      Training Data  & Testing Data & Time(s) &  Training Accuracy &  Testing Accuracy \\ \hline
                       100 & 40 & 3.546 & 98\% & 85\% \\
                       200 & 80 & 7.302 & 94\% & 87.5\% \\
                       300 & 120 & 10.524 & 93.7\% & 90\% \\
                       400 & 160 & 14.833 & 93\% & 87.5\% \\
                       500 & 200 & 18.343 & 93.2\% & 90\% \\
                       600 & 240 & 21.813 & 92.7\% & 87.1\% \\
                       700 & 280 & 27.469 & 92.4\% & 89.3\% \\
                       800 & 320 & 33.776 & 92.1\% & 90.9\% \\
                       900 & 360 & 44.653 & 90.3\% & 90\% \\
                       1000 & 400 & 56.510 & 92.3\% & 89\% \\     \hline
       \end{tabular}}
     \end{table}

    Table 5.2 shows that solving Optimization (\ref{2.1}) by Algorithm \ref{alg:ADMM} is feasible in terms of running time and accuracy. Next we will show that choosing different kinds of loss functions and kernels have different accuracy.

    Now we sample from the area $\Omega_{1}$ and $\Omega_{2}$ randomly to obtain a training set with $300$ points and a testing set with $120$ points. Some parameters of these experiments represent as follows.

    \begin{itemize}
    \setlength{\itemsep}{0pt}
    \setlength{\parskip}{0pt}
      \item The kernels $K_{1}$ and $K_{2}$, where $\sigma_{1}=2$ and $\sigma_{2}=1$.
      \item The loss functions $L_{1}$, $L_{2}$, $L_{3}$ and $L_{4}$.
      \item $\lambda=0.5$, $\rho=5$ and $\varepsilon_{0}=10^{-12}$.
      \item Choose $20$ initial values randomly in $[-10,10]^{N}$.
    \end{itemize}

    In each experiment, we will choose a loss function and a kernel. The results of these experiments represent as follows.
    \begin{table}[h]
    \footnotesize
    \caption{Numerical Results of Using Different Loss Functions and Kernels.}
    \centering
   {\begin{tabular}{cccc} \hline
         Loss Function & Kernel & Training Accuracy & Testing Accuracy \\ \hline
        $L_{1}$   & $K_{1}$ &  95.7\% & $87.5\%$ \\
        $L_{2}$   & $K_{1}$ &  95\% & $88.3\%$ \\
        $L_{3}$   & $K_{1}$ &  95.7\% & $88.3\%$ \\
        $L_{4}$   & $K_{1}$ &  95\% & $87.5\%$ \\
        $L_{1}$   & $K_{2}$ &  92\% & $90.8\%$ \\
        $L_{2}$   & $K_{2}$ &  93.7\% & $90\%$ \\
        $L_{3}$   & $K_{2}$ &  94.7\% & $90.8\%$ \\
        $L_{4}$   & $K_{2}$ &  93\% & $90\%$ \\     \hline
    \end{tabular}}
    \label{table:2}
    \end{table}

    From Table \ref{table:2}, it is easy to see that the nonconvex loss function  $L_{3}$ performs better than $L_{1}$, $L_{2}$ and $L_{4}$ in these experiments. It shows that the SVM in the RKHS with the nonconvex loss function is better than the SVM in the RKHS with the convex loss functions for some cases. Next, we introduce the numerical experiment results on a real-world benchmark dataset.

   \subsection{Examples on UCI Machine Learning Repository}
   The vinho verde data in UCI machine learning repository has two kinds of wine samples. We will identify them based on physicochemical tests. There are 11 input variables about them, which are fixed acidity, volatile acidity, citric acid, residual sugar, chlorides, free sulfur dioxide, total sulfur dioxide, density, pH, sulphates and alcohol. We have 2000 wine samples in training set, and a half of them are labeled by +1 and the others are labeled by -1. Moreover, we have 718 wine samples in testing set, and a half of them are labeled by +1 and the others are labeled by -1. Next, we introduce some parameters of these experiments as follows.
    \begin{itemize}
    \setlength{\itemsep}{0pt}
    \setlength{\parskip}{0pt}
      \item The kernels $K_{1}$ and $K_{2}$, where $\sigma_{1}=5$ and $\sigma_{2}=5$.
      \item The loss functions $L_{1}$, $L_{2}$, $L_{3}$ and $L_{4}$.
      \item $\lambda=0.5$, $\rho=1$ and $\varepsilon_{0}=10^{-12}$.
      \item Choose $20$ initial values randomly in $[-10,10]^{N}$.
    \end{itemize}

In each experiment, we will choose a loss function and a kernel and we have the following results.
    \begin{table}[H]
    \footnotesize
    \caption{Numerical Results on Vinho Verde Data.}
    \centering
    \begin{tabular}{cccc} \hline
         Loss Function & Kernel & Training Accuracy & Testing Accuracy \\ \hline
        $L_{1}$   & $K_{1}$ &  99,9\% & $90.0\%$ \\
        $L_{2}$   & $K_{1}$ &  100\% & $90.8\%$ \\
        $L_{3}$   & $K_{1}$ &  99.9\% & $90.3\%$ \\
        $L_{4}$   & $K_{1}$ &  99.9\% & $90.3\%$ \\
        $L_{1}$   & $K_{2}$ &  100\% & $88.3\%$ \\
        $L_{2}$   & $K_{2}$ &  100\% & $91.8\%$ \\
        $L_{3}$   & $K_{2}$ &  100\% & $87.6\%$ \\
        $L_{4}$   & $K_{2}$ &  100\% & $91.2\%$ \\ \hline
    \end{tabular}
    \label{table:3}
    \end{table}

    From Table \ref{table:3}, we check that $L_{2}$ performs better than $L_{1}$, $L_{3}$ and $L_{4}$. It shows that in some cases the nonconvex loss function is more suitable than the convex loss function, which is our motivation of this paper.

   In Section \ref{sec:5}, we demonstrate the effectiveness of solving Optimization (\ref{2.1}) by Algorithm \ref{alg:ADMM}. In addition, we give some examples to show that in some cases, the SVM in the RKHS with the nonconvex loss functions are better than the SVM in the RKHS with the convex loss functions. Therefore, we should reconsider not only convex loss function but also nonconvex loss function. \\

\noindent {\bf Acknowledgments.} This program is supported by the National Natural Science Foundation of China under grant \#12071157, \#12026602 and the Natural Science Foundation of Guangdong \#2019A1515011995 and \#2020B1515310013.

\hbox to14cm{\hrulefill}\par
   \  \\

   {\small MINGYU MO} \par
   School of Mathematical Sciences \par
   South China Normal University \par
   Guangzhou, 510631, Guangdong, PR China \par
   Email-address: mmymaths@qq.com. \\
   \ \\

   {\small QI YE} \par
   School of Mathematical Sciences \par
   South China Normal University \par
   Guangzhou, 510631, Guangdong, PR China \\

   Pazhou Lab \par
   Guangzhou, 511442, Guangdong, PR China.\par
   Email-address: yeqi@m.scnu.edu.cn. \\

\end{document}